\theoremstyle{plain}
\newtheorem{thrm}{Theorem}[section]
\newtheorem*{thrm*}{Theorem}
\newtheorem{lemma}[thrm]{Lemma}
\newtheorem{prop}[thrm]{Proposition}
\newtheorem{cor}[thrm]{Corollary}
\theoremstyle{definition}
\newtheorem{dfn}[thrm]{Definition}
\theoremstyle{remark}
\newtheorem{rmrk}[thrm]{Remark}
\theoremstyle{example}
\numberwithin{equation}{section}
\begin{document}

\newcommand{\tx}{\tilde x}
\newcommand{\R}{\mathbb R}
\newcommand{\N}{\mathbb N}
\newcommand{\C}{\mathbb C}
\newcommand{\lie}{\mathcal G}
\newcommand{\hN}{\mathcal N}
\newcommand{\D}{\mathcal D}
\newcommand{\A}{\mathcal A}
\newcommand{\B}{\mathcal B}
\newcommand{\sL}{\mathcal L}
\newcommand{\sLi}{\mathcal L_{\infty}}

\newcommand{\G}{\Gamma}
\newcommand{\x}{\xi}

\newcommand{\eps}{\epsilon}
\newcommand{\al}{\alpha}
\newcommand{\be}{\beta}
\newcommand{\p}{\partial}  
\newcommand{\lig}{\mathfrak}

\def\dist{\mathop{\varrho}\nolimits}

\newcommand{\BCH}{\operatorname{BCH}\nolimits}
\newcommand{\Lip}{\operatorname{Lip}\nolimits}
\newcommand{\Hol}{C}                             
\newcommand{\lip}{\operatorname{lip}\nolimits}
\newcommand{\capQ}{\operatorname{Cap}\nolimits_Q}
\newcommand{\pCap}{\operatorname{Cap}\nolimits_p}
\newcommand{\Om}{\Omega}
\newcommand{\om}{\omega}
\newcommand{\half}{\frac{1}{2}}
\newcommand{\e}{\epsilon}
\newcommand{\vn}{\vec{n}}
\newcommand{\X}{\Xi}
\newcommand{\tLip}{\tilde  Lip}

\newcommand{\Span}{\operatorname{span}}

\newcommand{\ad}{\operatorname{ad}}
\newcommand{\Hm}{\mathbb H^m}
\newcommand{\Hn}{\mathbb H^n}
\newcommand{\Hone}{\mathbb H^1}
\newcommand{\Lie}{\mathfrak}
\newcommand{\Layer}{V}
\newcommand{\hgrad}{\nabla_{\!H}}
\newcommand{\im}{\textbf{i}}
\newcommand{\nz}{\nabla_0}
\newcommand{\s}{\sigma}
\newcommand{\se}{\sigma_\e}

\newcommand{\ued}{u^{\e,\delta}}
\newcommand{\ueds}{u^{\e,\delta,\sigma}}
\newcommand{\tnabla}{\tilde{\nabla}}

\newcommand{\bx}{\bar x}
\newcommand{\by}{\bar y}
\newcommand{\bt}{\bar t}
\newcommand{\bs}{\bar s}
\newcommand{\bz}{\bar z}
\newcommand{\btau}{\bar \tau}

\newcommand{\LC}{\mbox{\boldmath $\nabla$}}
\newcommand{\Ne}{\mbox{\boldmath $n^\e$}}
\newcommand{\nuo}{\mbox{\boldmath $n^0$}}
\newcommand{\nuu}{\mbox{\boldmath $n^1$}}
\newcommand{\nue}{\mbox{\boldmath $n^\e$}}
\newcommand{\nuek}{\mbox{\boldmath $n^{\e_k}$}}
\newcommand{\dse}{\nabla^{H\Su, \e}}
\newcommand{\dso}{\nabla^{H\Su, 0}}
\newcommand{\tX}{\tilde X}

\newcommand\red{\textcolor{red}}
\newcommand\green{\textcolor{green}}

\newcommand{\Xie}{X^\epsilon_i}
\newcommand{\Xje}{X^\epsilon_j}
\newcommand{\Su}{\mathcal S}
\newcommand{\F}{\mathcal F}

\title[Uniform Gaussian bounds and total variation flow ]{Uniform Gaussian bounds for subelliptic heat kernels and an application to the total variation flow of graphs over Carnot groups}

\author[L. Capogna]{Luca Capogna}
\address{Luca Capogna\\Department of Mathematical Sciences, University of Arkansas, Fayetteville, AR 72701, and Institute for Mathematics and its Applications, University of Minnesota, Minneapolis, MN 55455\\
}

\author[G.  Citti]{Giovanna  Citti}\address{Dipartimento di Matematica, Piazza Porta S. Donato 5,
40126 Bologna, Italy}\email{giovanna.citti@unibo.it}
\author[M.  Manfredini]{Maria  Manfredini}\address{Dipartimento di Matematica, Piazza Porta S. Donato 5,
40126 Bologna, Italy}\email{maria.manfredini@unibo.it}
\keywords{mean curvature flow, sub-Riemannian geometry, Carnot groups\\
The authors are partially funded by NSF awards  DMS 1101478 and DMS 0800522  (LC) and by CG-DICE ISERLES European Project (GC) and  by the University of Bologna:
funds for selected research topics (MM)}

\begin{abstract} In this paper we study  heat kernels associated to a Carnot group $G$, endowed with a family of collapsing   left-invariant Riemannian metrics $\sigma_\e$ which converge in the Gromov-Hausdorff sense to a sub-Riemannian structure
on $G$ as $\e\to 0$.  The main new  contribution are  Gaussian-type bounds on the heat kernel for the $\sigma_\e$ metrics which are stable as $\e\to 0$ and extend the previous time-independent estimates in \cite{CiMa-F}. As an application we study 
well posedness of  the  total variation flow of graph surfaces over a  bounded domain in  $(G,\s_\e)$. We establish  interior and boundary gradient estimates, and develop a Schauder theory which are stable as $\e\to 0$. As a consequence we obtain long time existence  of smooth solutions of  the  sub-Riemannian flow ($\e=0$), which in turn yield  sub-Riemannian minimal surfaces as $t\to \infty$.  \end{abstract}
\maketitle

\section{Introduction}
Models of image processing based on total variation flow $$\p_t u=div(\nabla u/|\nabla u|)$$ have been first introduced in by
Rudin, Osher, and Fatemi in \cite{ROF}, in order to perform edge preserving denoising. We refer the reader to the review paper \cite{CEPY} where  many recent applications of total variation equation in the Euclidean setting are presented.
The flow $t\to u(\cdot, t)$ represents the gradient descent  associated to the total variation energy $\int |\nabla u|$ and as such has the property that both the total variation of the solution $t\to u(\cdot, t)$ 
and the perimeter measure of fixed level sets $\{u(\cdot, t)=const\}$ are  non-increasing in time.
 Aside from its usefulness in image processing, the flow also arises in connection with the 
 limit of solutions of the parabolic $p-$Laplacian $\p_t u_p= div(|\nabla u_p|^{p-2} \nabla u_p)$
 as the parameter $p\to 1^+$. In the case where the evolution of graphs $S_t=\{(x,u(x,t))\}$ is considered, i.e. 
 $\p_t u = div (\nabla u /\sqrt{1+|\nabla u|^2})$, then in both the total variation flow and  the closely related {\it mean curvature flow} $\p_t u=\sqrt{1+|\nabla u|^2}div(\nabla u/\sqrt{1+|\nabla u|^2})$, given appropriate boundary/initial conditions, global in time solutions asymptotically converge to minimal graphs. For further results, references  and applications  of the Euclidean total variation flow we refer the reader to
 \cite{ABCM}, \cite{ACM}, \cite{BCN} and for an anisotropic version of the flow to \cite{moll}.
 
 \bigskip
 In this paper we  study  long time existence of graph solutions of the total variation flow 
 in a special class of degenerate Riemannian ambient spaces: The so-called (sub-Riemannian) 
 Carnot groups \cite{fol:1975}, \cite{ste:harmonic}. Such spaces are nilpotent Lie groups endowed with a metric structure  $(G,\sigma_0)$ that arises as limit of collapsing left-invariant {\it tame} Riemannian structures $(G,\sigma_\e)$. Our motivations are twofold:
 
 (a) On the one hand, minimal surfaces and mean curvature flow in the Carnot group setting (and in particular in the special case of the Heisenberg group) have been the subject of numerous papers in recent years, leading to partial solutions of long-standing problems such as the Pansu conjecture\footnote{See for instance  \cite{CDPT}, \cite{RR}, \cite{MR},  \cite{Pauls}, \cite{CHMY} \cite{DGNP} ,\cite{CHY}, \cite{CCM1}, \cite{CCM2}, \cite{serracassano} and references therein}. Despite such advances little is known about explicit construction of minimal surfaces. Since the asymptotic limit of graphical mean curvature flow provide minimal surfaces in the Riemannian setting it is only natural to follow the same approach in the sub-Riemannian setting. However this avenue runs into considerable (so far unsolved) technical difficulties due to the combined effect of the degeneracy of the metric and the non-divergence form aspect of the relevant PDE. In the present work we provide the basis for the construction of sub-Riemannian minimal graphs through the asymptotic behavior as $t\to \infty$
 of solutions of the sub-Riemannian total variation flow. For previous work on a similar theme
 see the work of Pauls \cite{Pauls} and Cheng, Hwang and Yang \cite{CHY}.

 (b)  A new class of image processing models based on the functionality of the visual cortex  have been recently introduced in Lie groups. With a generalization of the classical Bargmann transform studied by Folland  (see \cite{Folland}) which lifts a $L^2$ function to a new one, defined on the phase space, an image can be lifted to a Lie group with a sub-Riemannian metric. The choice of the Lie group depends on the geometric property of the image to be studied:   in \cite{CS},  and \cite{D1}, \cite{D2}  2D images are lifted to surfaces in the Lie group of Euclidean motions of the plane to study geometric properties of their level lines, in \cite{D3}  cardiac images  are lifted in the Heisenberg group to study their deformations, in \cite{BCCS} moving images are lifted  in the Galilei group. The image processing is performed in these groups with algorithms expressed in terms of second order subelliptic differential equations. In particular sub-Riemannian mean curvature flows or total variation flows can be applied to perform image completion or impainting in this setting.  
 
 \bigskip
 Our approach to the existence of global (in time) smooth solutions is based on a Riemannian approximation scheme:
We  study  graph solutions of the total variation flow
in the Riemannian spaces $(G,\sigma_\e)$ where $G$ is a Carnot group
and $\sigma_\e$ is a family of tame Riemannian metrics that 'collapse'  as $\e\to 0$ to a sub-Riemannian metric $\s_0$ in $G$. The {\it main technical novelties  in the paper} are a series of  different a-priori estimates, which are stable as $\e\to 0$:

\begin{enumerate} 
\item Heat kernel estimates for sub-Laplacians and their elliptic regularizations (see Proposition \ref{uniform heat kernel estimates}). These estimates are established in the setting of any Carnot group and provide a {\it parabolic} counterpart to the  time-independent estimates proved by two of the authors  in \cite{CiMa-F}.

\item Uniform Schauder estimates for second order, non-divergence form subelliptic PDE and their {\it elliptic} regularizations  (see Proposition \ref{ellek}). These estimates are established in the setting of any Carnot group.

\item Interior gradient estimates for solutions of the total variation flow  (see 
 Proposition \ref{u_t}). These estimates are established  in the setting of any Carnot group.

\item Boundary gradient estimates for solutions of the total variation flow  (see Proposition \ref{boundary estimates}). These estimates are established only in the setting of  Carnot group of step two.
\end{enumerate}
We remark that (3) and  (4) can also be proved, with similar arguments, for solutions of the sub-elliptic mean curvature flow\footnote{For more references on the sub-Riemannian mean curvature flow see \cite{CC}, \cite{Dragoni}, \cite{ccs} and \cite{Manfredi}}. The limitation to Carnot groups of step two in (4) is due to the fact that 
for step three and higher there no known suitable barrier functions, or in other words, the class of known explicit minimal graphs is not sufficiently large.

In order to state our results we need to introduce some notation.

\subsection{Carnot group structure}
Let $G$ be an analytic and simply connected Lie group with
topological dimension $n$ and such that its Lie algebra $\lie$
admits a stratification $\lie=V^1\oplus V^2 \oplus ...\oplus V^r$,
where $[V^1,V^j]=V^{j+1}$, if $j=1,\ldots,r-1$, and $[V^k,V^r]=0$,
$k=1,\ldots,r$. Such groups are called in \cite{fol:1975},
\cite{fs:hardy}, and \cite{ste:harmonic} {\sl stratified nilpotent
Lie groups}. Set $H(0)= V^1$, and for any $x\in G$ we let
$H(x)=xH(0)=\text{span}(X_{1},\ldots,X_{m})(x)$. The distribution
$x\to H(x)$ is called {\it the horizontal sub-bundle} $H$. 

The metric structure is given by assuming that one has a  
left invariant positive definite form $\s_0$ defined in the sub-bundle $H$. We fix a orthonormal horizontal frame $X_{1},\ldots,X_{m}$ which we complete it to a basis $(X_1,\ldots,X_{n})$ of $\lie$
by choosing for every $k=2, \ldots,  r$ a basis of $V_k$.  If $X_i$
belongs to $V_k$, then we will define its {\it homogeneous degree } as \begin{equation}\label{degree} d(i)=k .\end{equation}
We will denote by $xX
= \sum_{i=1}^{n} x_{i} X_{i}$ a generic element of $\lie$.
 Since the exponential map $\exp:\lie \to G$ is a
global diffeomorphism we use exponential coordinates in $G$, and
denote $x=(x_1, \ldots, x_n)$ the point $\exp \big(xX\big).$ We
also set $x_H = (x_1, \ldots, x_m)$.   Define non-isotropic
dilations as $\delta_s(x)= (s^{d(i)} x_{i})$, for $s>0$.

 We let
$\nabla_0=(X_1,\ldots,X_m)$ denote the {\it horizontal gradient}
operator. If $\phi\in C^{\infty}(G)$ we set $\nabla_0 \phi=\sum_{i=1}^m X_i\phi X_i$ and $|\nabla_0 \phi|^2=\sum_{i=1}^m (X_i \phi )^2$.

We denote by $(X_1,\ldots,X_{n})$ (resp. $(X^r_1,\ldots,
X^r_{n})$) the left invariant (resp. right invariant) translation of
the frames $(X_1,\ldots,X_{n})$.
 
   The vectors $X_{1},\ldots, X_{m}$ and their commutators span
all the Lie algebra
 $\lie$, and consequently verify H\"ormander's finite rank condition
(\cite{hormander}). This allows to use the results from
\cite{nsw},  and define a control distance $d_0(x,y)$
 associated to the distribution $X_{1},\ldots, X_{m}$, which is
called {\sl the
 Carnot-Carath\'eodory metric} (denote by $d_{r,0}$
 the corresponding right invariant distance). We
 call the couple $(G,d_0)$ a {\it Carnot Group}.

We define a family of left invariant Riemannian metrics $\s_{\e}$,
$\e>0$ in $\lie$ by requesting that  $$\{X_1^\e,\ldots,X_n^\e\}:=\{X_1,\ldots, X_m, \e
X_{m+1}
 ,\ldots, \e X_n\}$$ is an orthonormal
frame. We will denote by $d_{\e}$ the corresponding distance
functions. Correspondingly we use $\nabla_\e,$ (resp. $
\nabla^r_\e$) to denote the left (resp. right) invariant gradients. In particular,
if  $\phi\in C^{\infty}(G)$ we set $\nabla_\e \phi=\sum_{i=1}^m X_i^\e\phi X_i^\e$ and  $|\nabla_\e \phi|^2=\sum_{i=1}^n (X_i^\e \phi )^2$.

We conclude by recalling  the expression of the left invariant vector fields
in exponential coordinates  (see \cite{Roth:Stein})
\begin{equation}\label{bch}
X_i = \p_i + \sum_{k=d(i)+1}^r\sum_{d(j) = k}p_{ik}^j(x)\p_j,
\end{equation}
where $p_{ik}^j(x)$ is an homogeneous polynomial of degree
$k-d(i)$ and depends only on $x_h$, with $d(1)\leq d(h) \leq
k-d(i)$.

\subsection{The total variation}

The total variation flow is characterized by the fact that each point of the evolving surface graph moves in the direction of the  {\it upward} unit normal with speed equal to the mean curvature times the volume element.
In the setting of the approximating Riemannian metrics $(G,\s_\e)$ and in terms of the functions $t\to u(\cdot,t):\Om\subset G \to \R$ describing the evolving graphs, the relevant equation reads:

\begin{equation}\label{pdee}\frac{\p u_{\e}}{\p t} = h_{\e}=\sum_{i=1}^n X_i^{\e}\Big(\frac{X_i^{\e}u_{\e}}{W_{\e} }\Big)=\sum_{i,j=1}^na_{ij}^\e (\nabla_\e u_\e) X_i^\e X_j^\e u_\e
\end{equation}
for $x\in \Om$ and $t>0$, with $u_{\e}(x,0)=\varphi(x)$, $h_\e$ is the mean curvature of the graph of $u_\e (\cdot, t)$ and $$W_\e^2=1+|\nabla_\e u_\e|^2=
1+\sum_{i=1}^n (X_i^\e u_\e)^2 \text{ and } a_{ij}^\e(\xi)=W_\e^{-1}\Bigg(\delta_{ij}-\frac{\xi_i \xi_j}{1+|\xi|^2}\Bigg),$$ for all $\xi\in \R^n$.
In the sub-Riemannian limit $\e=0$ the equation reads
\begin{equation}\label{pde01}\frac{\p u}{\p t} =\sum_{i=1}^m X_i  \Big(\frac{X_iu}{\sqrt{1+|\nabla_0 u|^2}}\Big),
\end{equation} for $x\in \Om$ and $t>0$, with $u(x,0)=\varphi(x)$.

We will be  concerned with uniform (in the parameter $\e$ as $\e\to 0$)  estimates and with the asymptotic behavior of solutions
to the initial value problem for the mean curvature motion of graphs
over bounded domains of a Carnot group $G$,
 \begin{equation}\label{ivp}
 \Bigg\{
 \begin{array}{ll}
 \p_t u_\e= h_\e   &\text{ in }Q=\Om\times(0,T) \\
 u_\e=\varphi &\text{ on } \p_p Q.
 \end{array}
 \end{equation}
 Here $\p_p Q=(\Om\times \{t=0\})\cup (\p\Om \times (0,T))$ denotes the parabolic boundary of $Q$.

  Given appropriate hypothesis on the   data,  for instance convexity of $\Om$ and $\varphi$ twice continuously differentiable, the classical parabolic theory yields existence and uniqueness for smooth solutions $u_\e$ of \eqref{ivp}. However classical parabolic theory will only provide estimates involving constants that degenerate as $\e\to 0$ (in the transition from parabolic to degenerate parabolic regime). Our main goal consists in proving stable estimates.
  
  Our first result consists in showing that if the initial/boundary data is sufficiently smooth then the solutions of \eqref{ivp} are Lipschitz up to the boundary uniformly in $\e>0$.
  
\begin{thrm}{(Global gradient bounds)}\label{global estimates}
Let $G$ be a Carnot group of step two,
 $\Om\subset G$ a bounded, open, convex\footnote{We say that a set $\Om\subset G$ is convex in the Euclidean sense if
$\exp^{-1}(\Om)\subset \lie$ is convex in the Euclidean space $\lie$.
In a group of step two this condition is translation invariant.} set and $\varphi\in C^2(\bar{\Om})$. For $1\ge\e>0$ denote by  $u_\e\in C^2(\Om\times(0,T))\cap C^1(\bar\Om\times(0,T))$ the non-negative unique  solution of the initial value problem \eqref{ivp}. There exists $C=C(G, ||\varphi||_{C^2(\bar\Om)})>0$ such that
 \begin{equation}\label{b-1.1a}
 \sup_{\bar\Om\times(0,T)}|\nabla_\e u_\e| \le  \sup_{\bar\Om\times(0,T)}|\nabla_1 u_\e| \le C.
 \end{equation}
 \end{thrm}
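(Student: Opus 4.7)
The first inequality in \eqref{b-1.1a} is immediate from the definition of $\{X_i^\e\}$: for $\e\in(0,1]$ one has
\[
|\nabla_\e\phi|^2=\sum_{i=1}^m(X_i\phi)^2+\e^2\sum_{i=m+1}^n(X_i\phi)^2\le\sum_{i=1}^n(X_i\phi)^2=|\nabla_1\phi|^2.
\]
The content of the theorem is therefore the $\e$-independent bound on $|\nabla_1 u_\e|$ over $\bar\Om\times[0,T]$, and my plan is to obtain it by combining the interior estimate of Proposition \ref{u_t} with the boundary estimate of Proposition \ref{boundary estimates}, via a parabolic maximum-principle argument applied to $W^2:=1+|\nabla_1 u_\e|^2$.

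Concretely, I would differentiate \eqref{pdee} along each left-invariant field $X_k$, $k=1,\dots,n$, and set $w_k:=X_k u_\e$. This produces a linear parabolic equation
\[
\p_t w_k=\sum_{i,j=1}^n a_{ij}^\e(\nabla_\e u_\e)\,X_i^\e X_j^\e w_k + R_k,
\]
where $R_k$ collects the contributions of the commutators $[X_k,X_i^\e]$ acting on $u_\e$ and of the $X_k$-derivatives of the coefficients $a_{ij}^\e(\nabla_\e u_\e)$. The step-two hypothesis enters crucially here: for $i\le m$ one has $X_i^\e=X_i$ and $[X_k,X_i]\in V^2$ is a constant-coefficient linear combination of $X_{m+1},\dots,X_n$, while for $i>m$ the bracket $[X_k,X_i^\e]=\e[X_k,X_i]$ vanishes (since $V^3=0$). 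A Bochner-type manipulation then yields
\[
\p_t W^2\le\sum_{i,j} a_{ij}^\e(\nabla_\e u_\e)\,X_i^\e X_j^\e W^2 + C_1 W^2
\]
with $C_1=C_1(G)$ independent of $\e$. The parabolic maximum principle reduces the control of $W^2$ in $\bar Q$ to its values on $\p_p Q$: on $\Om\times\{0\}$ the bound is immediate from $\varphi\in C^2(\bar\Om)$, and on $\p\Om\times(0,T)$ the tangential components of $\nabla_1 u_\e$ are determined by tangential derivatives of $\varphi$ along $\p\Om$, while the horizontal inward-normal component is bounded by Proposition \ref{boundary estimates}.

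The main obstacle I anticipate is the passage from a horizontal bound to the full bound on $|\nabla_1 u_\e|$, i.e.\ controlling the vertical derivatives $X_k u_\e$ for $k>m$ on the lateral boundary: the barrier of Proposition \ref{boundary estimates} controls only $|\nabla_\e u_\e|$ on $\p\Om$, whose non-horizontal components are weighted by $\e$ and thus degenerate as $\e\to 0$. In a step-two group every such $X_k$ is a commutator $[X_i,X_j]$ of two horizontal fields, so a vertical derivative of $u_\e$ on $\p\Om$ can be recovered from two tangential horizontal derivatives of $\varphi$ (supplied by $\varphi\in C^2$) together with the horizontal normal derivative already bounded by the barrier. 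In higher step the analogous reduction would require iterated commutators and barrier graphs of increasingly complex profile, which are presently not available in the literature; this is precisely why the hypothesis is restricted to step two.
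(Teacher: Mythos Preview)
Your overall plan---reduce to parabolic-boundary data via an interior maximum principle and then invoke the barrier---matches the paper's structure, but both halves of your argument diverge from the paper in ways that create real gaps.

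\textbf{Interior estimate.} The paper does \emph{not} differentiate along the left-invariant $X_k$ and then fight commutator terms with a Bochner inequality. Instead it differentiates along the \emph{right}-invariant fields $X_k^r$ (Lemma~\ref{derivatedestre}). Since $[X_k^r,X_i]=0$ for all $i,k$, each $v_k:=X_k^r u_\e$ satisfies the linearized equation \eqref{diff-eq} \emph{with no remainder}, and the weak maximum principle gives $\sup_Q|v_k|\le\sup_{\p_pQ}|v_k|$ with no exponential factor $e^{C_1T}$. One then recovers $X_ku_\e$ from $X_k^ru_\e$ by a top-down induction on the layer (Proposition~\ref{u_t}), using that at the highest layer left- and right-invariant fields coincide, and at each lower layer the difference $X_k-X_k^r$ lies in strictly higher layers already controlled. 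Your Bochner route, by contrast, produces a remainder $R_k$ containing terms like $(\partial_{\xi_l}a_{ij}^\e)(X_kX_l^\e u)(X_i^\e X_j^\e u)$ and $a_{ij}^\e[X_k,X_i^\e X_j^\e]u$, which are quadratic in the Hessian of $u_\e$; the claim that $\sum_k w_kR_k\le C_1(G)W^2$ with $C_1$ independent of $\e$ and of $u_\e$ is not justified and does not follow from the step-two structure alone. Even if such an inequality held, it would yield a $T$-dependent bound, weaker than what the theorem asserts.

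\textbf{Boundary estimate.} You misread Proposition~\ref{boundary estimates}: it already bounds the \emph{full} gradient $|\nabla_1 u_\e|$ on $\p\Om\times(0,T)$, not merely $|\nabla_\e u_\e|$. The barrier comparison \eqref{boundary 20} is taken with respect to the fixed Riemannian distance $\mathrm{dist}_{\sigma_1}$, so it controls the $\sigma_1$-normal derivative of $v_\e=u_\e-\varphi$; since $v_\e\equiv0$ on $\p\Om$, that is the whole $\sigma_1$-gradient. Your proposed recovery of vertical derivatives on $\p\Om$ via commutators is therefore unnecessary---and, as stated, incorrect: $X_ku_\e=[X_i,X_j]u_\e=X_iX_ju_\e-X_jX_iu_\e$ is a \emph{second}-order expression in $u_\e$, and cannot be reconstructed from first tangential derivatives of $\varphi$ together with a first normal derivative.

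With these two corrections---use right-invariant differentiation for the interior step, and take the full-gradient boundary bound directly from Proposition~\ref{boundary estimates}---the proof becomes exactly the one-line combination the paper gives after Proposition~\ref{boundary estimates}.
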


\begin{rmrk}\label{rmrk-b}
The hypothesis of the Theorem above can be slightly weakened by asking that
$G$ is a step two Carnot group, $\varphi\in C^2(\bar\Om)$ but
that $\p\Om$ be required to be  convex  in the Euclidean sense only  in a neighborhood of its characteristic locus  $\Sigma(\p\Om)$ and that its mean curvature $h_\e^{\p\Om} \le -\delta<0$ in $\p\Om\setminus \Sigma(\p\Om)$.
\end{rmrk}

Having established Lipschitz bounds, it is easy to show that the right  derivatives $X^r_i u_\e$ of the solutions of \eqref{ivp} are themselves solutions of \eqref{diff-eq}, a divergence form, degenerate parabolic PDE whose weak solutions satisfy a Harnack inequality (see \cite{CCR} and Proposition \ref{propCCR} below). Consequently one obtains $C^{1,\alpha}$ interior estimates for the solution $u_\e$ which are uniform in $\e>0$. At this point one rewrites the PDE in \eqref{ivp} in  non-divergence form\footnote{this is possible since for $\e>0$ the solutions $u_\e$ are sufficiently smooth} of the total variation flow equation $\p_tu_\e=a_{ij}^\e (\nabla_\e u_\e) X_i^\e X_j^\e u_\e$ and invokes the 
Schauder estimates in Proposition \ref{ellek} to prove local higher regularity and long time existence. Since all the estimates are stable as $\e\to 0$ one immediately obtains smoothness and the consequent global in time existence of the solution for the sub-Riemannian case $\e=0$.

\begin{thrm} \label{global in time  existence results}
In the hypothesis of Theorem \ref{global estimates} (or Remark \ref{rmrk-b}) one has that there exists a unique solution
$u_\e \in C^{\infty}(\Om\times (0,\infty))\cap L^\infty((0,\infty),C^1(\bar \Om))$ of the initial value problem
\begin{equation}\label{ivp-inf}
 \Bigg\{
 \begin{array}{ll}
 \p_t u_\e= h_\e    &\text{ in }Q=\Om\times(0,\infty) \\
 u_\e=\varphi &\text{ on } \p_p Q 
 \end{array}
 \end{equation}
and that for each $k\in \N$ there exists $C_k=C_k(G,\varphi,k,\Om)>0$ not depending on $\e$ such that
\begin{equation}\label{stable estimates}
||u_\e||_{C^k(Q)} \le C_k.
\end{equation}
\end{thrm}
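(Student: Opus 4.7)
The plan is to use the global gradient bound of Theorem \ref{global estimates} as input to a bootstrap scheme that alternates a Harnack/H\"older estimate for a divergence-form linearized equation with the uniform Schauder estimates of Proposition \ref{ellek}. For each fixed $\e>0$ short time existence and uniqueness of a smooth solution $u_\e$ of \eqref{ivp-inf} is classical, since \eqref{pdee} is uniformly parabolic with constants depending on $\e$; the global bound $\sup_{\bar\Om\times(0,T)}|\nabla_1 u_\e|\le C$ from Theorem \ref{global estimates} prevents gradient blow-up, so continuation in time is reduced to producing uniform higher-derivative estimates.

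For such estimates, I would first observe that any right-invariant derivative $v_\e := X_i^r u_\e$ satisfies a linear divergence-form equation
$$\p_t v_\e = \sum_{i,j=1}^n X_i^\e\big(a_{ij}^\e(\nabla_\e u_\e)\, X_j^\e v_\e\big),$$
since left-invariant and right-invariant vector fields commute. Because of Theorem \ref{global estimates}, the coefficients $a_{ij}^\e(\nabla_\e u_\e)$ are uniformly bounded and, on the set $\{|\xi|\le C\}$, uniformly comparable to the $\sigma_\e$ inner product, so the Harnack inequality of \cite{CCR} (Proposition \ref{propCCR}) applies and yields interior $C^\alpha$ estimates on $v_\e$ that are stable as $\e\to 0$. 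Passing from the right-invariant frame back to $X_1^\e,\dots,X_n^\e$ via the polynomial identities \eqref{bch} promotes this to a uniform interior $C^{1,\alpha}$ estimate on $u_\e$. Rewriting \eqref{pdee} in non-divergence form and applying Proposition \ref{ellek} to the resulting equation, whose coefficients $a_{ij}^\e(\nabla_\e u_\e)$ are $C^\alpha$ with $\e$-independent norms, produces a uniform $C^{2,\alpha}$ estimate. Differentiating \eqref{pdee} along horizontal directions and iterating the Schauder step gives \eqref{stable estimates} for every $k\in\N$.

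The main obstacle I expect is maintaining quantitative stability as $\e\to 0$ at every step of this bootstrap, since the $\sigma_\e$ metric degenerates on $V^2\oplus\cdots\oplus V^r$: one must invoke the subelliptic versions of Propositions \ref{propCCR} and \ref{ellek}, whose constants depend on the Carnot--Carath\'eodory structure rather than on the Riemannian ellipticity constant of $\sigma_\e$, and one must handle the conversion between right-invariant and $\sigma_\e$-orthonormal frames without picking up factors that blow up in $\e$. Once the uniform bounds \eqref{stable estimates} are in hand, uniqueness of $u_\e$ follows from the comparison principle for the quasilinear equation \eqref{pdee}, and Arzel\`a--Ascoli on compact subsets of $\Om\times(0,\infty)$ extracts a subsequential limit $u = \lim_{\e\to 0} u_\e$ which is smooth in $\Om$, inherits the same $k$-th order bounds, and solves the sub-Riemannian equation \eqref{pde01} with the prescribed initial-boundary data, thereby covering the case $\e=0$.
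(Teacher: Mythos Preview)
Your proposal is correct and follows essentially the same route as the paper: global gradient bounds from Theorem \ref{global estimates}, then the divergence-form equation \eqref{diff-eq} for right-invariant derivatives together with the Harnack inequality of \cite{CCR} to obtain uniform interior $C^{1,\alpha}$ estimates (this is exactly Corollary \ref{propCCR}), followed by the uniform Schauder estimates of Propositions \ref{elle2} and \ref{ellek} applied to the non-divergence form of \eqref{pdee} to bootstrap to all orders. The paper phrases the induction step slightly differently---it invokes Proposition \ref{ellek} directly (if $u_\e\in C^{k+1,\alpha}_{\e,X}$ then $a_{ij}^\e(\nabla_\e u_\e)\in C^{k,\alpha}_{\e,X}$, hence $u_\e\in C^{k+2,\alpha}_{\e,X}$) rather than differentiating the equation---but this is the same mechanism.
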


\bigskip

Since the estimates are uniform in $\epsilon$ and in time,
and with respect to $\epsilon$, we will deduce the following corollary:

\begin{cor} 
Under the assumptions of the Theorem \ref{global estimates},
as $\e\to 0$ 
the solutions $u_\e$ converge uniformly (with all its derivatives) on compact subsets of $Q$ to the unique,  smooth solution 
$u_0\in C^{\infty}(\Om\times (0,\infty))\cap L^\infty((0,\infty),C^1(\bar \Om))$ of the sub-Riemannian total variation flow  \eqref{pde01} in $\Om\times (0,\infty)$ with initial data $\varphi$.
\end{cor}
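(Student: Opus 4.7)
The plan is to deduce the corollary directly from the stable $C^k$ estimates \eqref{stable estimates} of Theorem \ref{global in time existence results} via a compactness/limit argument. First I would fix any compact subset $K \subset Q = \Om \times (0,\infty)$ and any integer $k\ge 1$. The bound $\|u_\e\|_{C^k(Q)} \le C_k$, with $C_k$ independent of $\e\in(0,1]$, together with the corresponding bound at order $k+1$, makes the family $\{u_\e\}_{0<\e\le 1}$ precompact in $C^k(K)$ by Arzel\`a--Ascoli. A standard diagonal extraction over an exhaustion of $Q$ by compacts and over $k\in\N$ then yields a subsequence $\e_j\to 0$ and a function $u_0\in C^\infty(Q)$ such that $u_{\e_j}\to u_0$ in $C^k_{\mathrm{loc}}(Q)$ for every $k$. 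The $L^\infty((0,\infty),C^1(\bar\Om))$ bound from Theorem \ref{global in time existence results} passes to the limit and gives $u_0\in L^\infty((0,\infty),C^1(\bar\Om))$, and the boundary/initial condition $u_0 = \varphi$ on $\p_p Q$ follows from the uniform convergence up to $\p_p Q$ that the uniform $C^1$ bound on $\bar\Om$ together with boundary attainment for each $u_\e$ provides.

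Next I would pass to the limit in the PDE. Writing the equation in non-divergence form $\p_t u_\e = \sum_{i,j=1}^n a_{ij}^\e(\nabla_\e u_\e)\, X_i^\e X_j^\e u_\e$, we have for $i,j\le m$ that $X_i^\e = X_i$ and the coefficients $a_{ij}^\e(\nabla_\e u_\e)$ are smooth rational functions of the entries of $\nabla_\e u_\e$. Since $X_i^\e u_\e = \e X_i u_\e \to 0$ for $i>m$ and $X_i u_{\e_j}\to X_i u_0$ uniformly on compacts for $i\le m$, we get $|\nabla_\e u_\e|^2 \to |\nabla_0 u_0|^2$ locally uniformly, hence $W_\e\to \sqrt{1+|\nabla_0 u_0|^2}$. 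The terms with $i>m$ or $j>m$ carry factors of $\e$ which force them to vanish in the limit, and the terms with $i,j\le m$ converge to the corresponding terms of the sub-Riemannian mean curvature. A short computation then shows that
\[
\p_t u_0 = \sum_{i=1}^m X_i\!\left(\frac{X_i u_0}{\sqrt{1+|\nabla_0 u_0|^2}}\right),
\]
i.e.\ $u_0$ is a classical smooth solution of \eqref{pde01} on $Q$ attaining the datum $\varphi$ on $\p_p Q$.

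Finally, uniqueness of such a limit must be invoked to conclude that the full family $u_\e$ (not merely a subsequence) converges to $u_0$. Uniqueness for the sub-Riemannian problem \eqref{pde01} in the class of functions satisfying \eqref{stable estimates} (smooth, Lipschitz up to the boundary, matching the datum on $\p_p Q$) follows from a standard comparison argument: if $u_0$ and $\tilde u_0$ are two such solutions, then the difference $w=u_0-\tilde u_0$ satisfies a linear degenerate parabolic equation in non-divergence form with coefficients obtained by integrating $a_{ij}^0$ along the segment joining $\nabla_0 u_0$ and $\nabla_0 \tilde u_0$, and vanishes on $\p_p Q$; the weak maximum principle for subelliptic parabolic operators of this type (applicable because $u_0,\tilde u_0$ are smooth so the coefficients are bounded and the operator is a classical H\"ormander-type sub-Laplacian with lower order perturbations) then forces $w\equiv 0$. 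Any two cluster points of $\{u_\e\}$ as $\e\to 0$ would both satisfy this uniqueness statement and hence coincide, so the whole family converges in $C^k_{\mathrm{loc}}(Q)$ for every $k$. The main delicate point is this last uniqueness/comparison step for the sub-Riemannian limit equation, since the coefficient matrix degenerates along the missing directions; everything else is a routine Arzel\`a--Ascoli plus continuity-of-the-equation argument made possible by the fact that the hard work has already been carried out in securing the $\e$-independent estimates \eqref{stable estimates}.
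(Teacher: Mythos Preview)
Your argument is correct and is precisely the route the paper has in mind: the corollary is stated without proof, immediately after Theorem \ref{global in time  existence results}, as a direct consequence of the $\e$-independent bounds \eqref{stable estimates}. Your Arzel\`a--Ascoli extraction, passage to the limit in the non-divergence form of the equation, and uniqueness-implies-full-sequence-convergence are exactly what is needed.

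One minor simplification: for the uniqueness step you linearize along the segment between two putative solutions and appeal to a weak maximum principle for the resulting degenerate linear operator. This works, but the paper already records the relevant tool in Lemma \ref{ESth32bis}, which is stated for every $\e\ge 0$ and in particular for $\e=0$; invoking it directly for the nonlinear sub-Riemannian problem gives uniqueness in one line and avoids the need to discuss H\"ormander-type hypotheses on the linearized coefficients.
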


\begin{cor} 
Under the assumptions of Theorem \ref{global estimates},
as $T\to \infty$
the solutions $u_\e(\cdot, t)$ converge uniformly on compact subsets of $\Om$ to the unique  solution
of the minimal surface equation $$h_\e=0\quad in \; \Om$$ with boundary value $\varphi$,
while $u_0=\lim_{\e\to 0} u_\e\in C^\infty (\Om)\cap Lip(\bar \Om)$ is  the unique 
solution of the
{\it sub-Riemannian minimal surfaces} equation
$h_0=0$  in $\Om$, with boundary data $\varphi$.
\end{cor}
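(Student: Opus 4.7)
The plan is to combine the uniform in time $C^k$ estimates \eqref{stable estimates} with the gradient-flow structure of \eqref{pdee}.

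First, I would observe that equation \eqref{pdee} is the $L^2(\Om)$ gradient flow of the $\s_\e$-area functional
\[
A_\e[u] = \int_\Om \sqrt{1+|\nabla_\e u|^2}\,dx.
\]
Since $u_\e=\varphi$ on $\p\Om$ is time-independent, $\p_t u_\e$ vanishes on $\p\Om$, and integrating by parts (left-invariant vector fields integrate by parts against Lebesgue measure on the Carnot group) yields
\[
\frac{d}{dt}A_\e[u_\e(\cdot,t)] = -\int_\Om h_\e^2(\cdot,t)\,dx \le 0.
\]
Since $A_\e[u_\e(\cdot,t)]\ge 0$, integration in $t$ gives $\int_0^\infty\!\int_\Om h_\e^2\,dx\,dt < \infty$, so some sequence $t_n\to\infty$ satisfies $\|h_\e(\cdot,t_n)\|_{L^2(\Om)}\to 0$.

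By Theorem \ref{global estimates} and \eqref{stable estimates}, the family $\{u_\e(\cdot,t_n)\}_n$ is precompact in $C^\infty_{loc}(\Om)\cap \Lip(\bar\Om)$, so along a subsequence $u_\e(\cdot,t_n)\to u_\e^\infty$ in $C^\infty_{loc}(\Om)$, with $u_\e^\infty\in C^\infty(\Om)\cap \Lip(\bar\Om)$ and $u_\e^\infty=\varphi$ on $\p\Om$. Passing to the limit in $\p_t u_\e = h_\e$ along $t=t_n$ then gives $h_\e(u_\e^\infty)=0$ in $\Om$. The Dirichlet problem for the Riemannian minimal surface equation $h_\e=0$ with smooth boundary data admits at most one smooth solution by the classical comparison principle for the mean curvature operator in $(G,\s_\e)$; hence $u_\e^\infty$ is independent of the subsequence and the full orbit $u_\e(\cdot,t)\to u_\e^\infty$ uniformly on compact subsets of $\Om$ as $t\to\infty$.

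For the sub-Riemannian limit, the bounds \eqref{stable estimates} are uniform in $\e$, so the family $\{u_\e^\infty\}_{\e\in(0,1]}$ is precompact in $C^\infty_{loc}(\Om)$ and uniformly Lipschitz on $\bar\Om$. Along a diagonal subsequence $\e_k\to 0$, $u_{\e_k}^\infty\to u_0\in C^\infty(\Om)\cap \Lip(\bar\Om)$ with $u_0=\varphi$ on $\p\Om$. Since $X_j^{\e_k}=X_j$ for $j\le m$ and $X_j^{\e_k}=\e_k X_j\to 0$ for $j>m$, and $a_{ij}^{\e_k}\to a_{ij}^0$ uniformly on compact subsets of $\R^n$, passing the equation $h_{\e_k}(u_{\e_k}^\infty)=0$ to the limit produces $h_0(u_0)=0$ in $\Om$. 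The main technical obstacle I expect is ensuring full (not merely subsequential) convergence, which rests on uniqueness of the Dirichlet problem for $h_\e=0$: for $\e>0$ this is classical, while for the degenerate sub-Riemannian case $\e=0$ uniqueness of $u_0$ must be extracted by passing the Riemannian comparison principle through the $\e\to 0$ limit using the stable estimates, which forces any two subsequential limits to coincide.
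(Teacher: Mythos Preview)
The paper states this corollary without proof, so there is no ``paper's own proof'' to compare against; your argument supplies exactly the kind of standard details a reader would fill in from Theorems~\ref{global estimates} and~\ref{global in time  existence results}. The energy-decay computation $\frac{d}{dt}A_\e[u_\e(\cdot,t)]=-\int_\Om h_\e^2\,dx$, extraction of a subsequential limit via the uniform $C^k$ bounds \eqref{stable estimates}, and upgrade to full convergence by elliptic uniqueness is the expected route and is correct for $\e>0$.

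The one point that deserves a sharper treatment is the uniqueness of the Dirichlet problem for $h_0=0$, which you correctly flag. Your proposed strategy of passing the Riemannian comparison principle through the limit is workable but indirect. A cleaner argument: if $v_1,v_2\in C^2(\Om)\cap C(\bar\Om)$ both solve $h_0=0$ with $v_1=v_2=\varphi$ on $\p\Om$, write the difference of the two equations in non-divergence form to obtain a linear equation $\sum_{i,j=1}^m b_{ij}(x)X_iX_j(v_1-v_2)+\sum_{k=1}^m c_k(x)X_k(v_1-v_2)=0$, where $b_{ij}$ is positive definite on the horizontal layer. Since $X_1,\ldots,X_m$ satisfy H\"ormander's condition, Bony's strong maximum principle for degenerate elliptic operators forces $v_1\equiv v_2$. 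This gives elliptic uniqueness directly at $\e=0$ and closes the argument without the diagonal-limit detour.
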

%
%


\section{Structure stability in the Riemannian limit}

The Carnot-Carath\'eodory metric  is equivalent
to a more explicitly defined pseudo-distance function, that we
will call (improperly) {\sl gauge distance}, defined as
\begin{equation}\label{gauge}
|x|^{2r!}=\sum_{k=1}^r \sum_{i=1}^{m_k} |x_{i }|^{\frac{2r!}{d(i)}}, \text{ and
} d_0(x,y)=|y^{-1}x|.
\end{equation}
The {\it ball-box} theorem in \cite{nsw} states that there exists $A=A(G,\sigma_0)$ such that for each $x\in G$,
$$A^{-1}|x|\le d_0(x,0)\le A|x|.$$
If $x\in G$ and $r>0$, we will denote by
 $$B(x,r)=\{y\in G \ |\ d_0(x,y)<r\}$$ the   balls
in the gauge distance. For each $\e>0$ we also define the distance function $d_\e$ corresponding to the Riemannian metric $\sigma_\e$,
$$d_\e(x,y)=\inf\{ \int_0^1 |\gamma'|_{\sigma_\e}(s) ds \text{ with }\gamma:[0,1]\to G \quad\quad\quad\quad\quad\quad\quad\quad $$$$\quad\quad\quad\quad\quad\quad\quad\quad\quad\quad\quad\quad
\text { a Lipschitz curve s. t. } \gamma(0)=x, \gamma(1)=y\}$$
as well as the  pseudo-distance
$d_{G,\e} (x,y)=N_\e (y^{-1}x)$ with
\begin{equation}
N_\e^2(x)=\sum_{d(i)=1} x_i^2+
 \min\bigg\{ \sum_{i=2}^r
 (
 \sum_{d(k)=i}
  x_k^2
 )^{\frac{1}{i}},
  \e^{-2}\sum_{d(i)\ge 2} x_i^2
 \bigg\}.
\end{equation}
Note that  in the definition of $d_\e$, if the curve for which the infimum is achieved happens to be horizontal
then $d_e(x,y)=d_0(x,y)$. In general we have $\sup_{\e>0} d_\e(x,y)= d_0(x,y)$ and
it is well known\footnote{See for instance \cite{gro:metric} and references therein} that $(G,d_{\e})$  converges
in the Gromov-Hausdorff sense as $\e\to 0$ to the sub-Riemannian
space $(G,d_0)$. 
The {\it ball-box} theorem in \cite{nsw} and elementary considerations yield that there exists $A=A(G,\sigma_0)>0$ independent of $\e$ such that for all $x,y\in G$
\begin{equation}\label{ball-box}
A^{-1} d_{G,\e}(x,y) \le d_\e (x,y) \le A d_{G,\e}(x,y) 
\end{equation}
(see for example \cite{CCR}).
\subsection{Stability of the homogenous structure  as $\e\to 0$}

 If $G$ is a Carnot group, $d_\e$ is the distance function associated to $\sigma_\e$, we will denote
$$B_\e(x,r)=\{y\in G| d_\e(x,y)<r\}.$$
If we denote by $dx$ the Lebesgue measure and by $|\Om|$ the corresponding measure of a subset $\Om$, then Rea and two of the authors have recently proved in \cite{CCR}
that \begin{prop}\label{homog-stab}
There is a constant $C $ independent of $\epsilon$ such that for every $x\in G$ and $r>0$,
$$|B_\e(x,2r)|\le C |B_\e(x,r)|.$$
 \end{prop}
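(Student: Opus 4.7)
The plan is to reduce doubling of $d_\e$-balls to doubling of sub-level sets of the pseudo-distance $N_\e$ at the identity, and then to verify the latter by a direct two-sided volume computation. First, left-invariance of $\sigma_\e$ (hence of $d_\e$), combined with the fact that Lebesgue measure in exponential coordinates is a bi-invariant Haar measure on the nilpotent Lie group $G$, yields $|B_\e(x,r)|=|B_\e(0,r)|$ for every $x\in G$, so it suffices to prove the inequality when $x=0$. Next, the ball--box equivalence \eqref{ball-box} provides $A=A(G,\sigma_0)\ge 1$ with $\{y:N_\e(y)<r/A\}\subset B_\e(0,r)\subset\{y:N_\e(y)<Ar\}$. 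Setting $V_\e(\rho):=|\{y\in G: N_\e(y)\le \rho\}|$, it is therefore enough to establish $V_\e(2\rho)\le c\,V_\e(\rho)$ with $c=c(G,\sigma_0)$ independent of $\e$ and $\rho$.

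The core step is an explicit two-sided estimate for $V_\e(\rho)$. I decompose
\[
N_\e^2(y)=H(y)+\min\bigl(S(y),\,R_\e(y)\bigr),
\]
with $H(y)=\sum_{d(i)=1}y_i^2$, $S(y)=\sum_{i\ge 2}\bigl(\sum_{d(k)=i}y_k^2\bigr)^{1/i}$ and $R_\e(y)=\e^{-2}\sum_{d(k)\ge 2}y_k^2$. The upper bound follows from
\[
\{N_\e\le \rho\}\subset \{H\le \rho^2\}\cap\bigl(\{S\le \rho^2\}\cup\{R_\e\le \rho^2\}\bigr),
\]
while a ``split the budget'' observation gives the matching lower bounds
\[
\{N_\e\le \rho\}\supset\{H\le \rho^2/2\}\cap\{S\le \rho^2/2\},\qquad \{N_\e\le \rho\}\supset\{H\le \rho^2/2\}\cap\{R_\e\le \rho^2/2\}.
\]
Each of the four sets on the right is a product over the layers $V^i$: the horizontal factor is a Euclidean ball of radius $\sim\rho$ (volume $\sim\rho^{m_1}$), the sub-Riemannian factor becomes a fixed bounded set after the anisotropic dilation $y_k\mapsto \rho^{d(k)}y_k$ (volume $\sim\prod_{i\ge 2}\rho^{i m_i}=\rho^{Q-m_1}$, where $m_i=\dim V^i$ and $Q=\sum_{i\ge 1}i\,m_i$ is the homogeneous dimension), and the Riemannian factor is a Euclidean ball of radius $\sim\e\rho$ in $\bigoplus_{i\ge 2}V^i$ (volume $\sim(\e\rho)^{n-m_1}$). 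Collecting the pieces,
\[
V_\e(\rho)\asymp \rho^Q+\e^{n-m_1}\rho^n,
\]
with constants depending only on $G$ and $\sigma_0$.

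Doubling is then immediate: since $Q\ge n$, the right-hand side of the last display satisfies $V_\e(2\rho)\le 2^Q V_\e(\rho)$ up to a universal factor, and combining with the ball--box loss from \eqref{ball-box} gives $|B_\e(0,2r)|\le C\,|B_\e(0,r)|$ with $C=C(G,\sigma_0)$. The main obstacle will be the two-sided bound for $V_\e(\rho)$: the expression $H+\min(S,R_\e)$ couples the horizontal, sub-Riemannian, and Riemannian contributions in a non-separated way, so its sub-level sets are neither products nor disjoint unions of products. The ``split the budget'' trick resolves this at the cost of a harmless universal multiplicative constant, leaving the real content to be the elementary but careful layer-by-layer volume computation, which is where the homogeneous dimension $Q$ and the Riemannian topological dimension $n$ enter the two competing terms of the asymptotic formula and where uniformity in $\e$ becomes manifest.
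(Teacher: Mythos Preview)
Your argument is correct. The paper itself does not prove this proposition: it simply attributes the result to \cite{CCR} and states it without argument. So there is no ``paper's own proof'' to compare against here; you have supplied a self-contained proof where the paper only gives a citation.

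For what it is worth, your route---reduce to the origin by left-invariance, pass to $N_\e$-sublevel sets via the ball--box estimate \eqref{ball-box}, and then compute $V_\e(\rho)\asymp \rho^Q+\e^{n-m_1}\rho^n$ by splitting the budget between $H$ and $\min(S,R_\e)$---is the natural one and is essentially what one finds in \cite{CCR}. The two-sided volume estimate is the heart of the matter, and your layer-by-layer dilation argument handles it cleanly; the observation $Q\ge n$ (so that $2^Q$ dominates both powers) is exactly what makes the doubling constant independent of $\e$. One minor cosmetic point: when you write ``$V_\e(2\rho)\le 2^Q V_\e(\rho)$ up to a universal factor,'' the factor is just the ratio of the implicit constants in the two-sided estimate, so it would be cleaner to absorb it explicitly rather than leave it as a remark.
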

Having this property the spaces $(G,d_\e,dx)$ are called {\it  homogenous} with constant $C>0$ independent of $\e$ (see \cite{cw:1971}).

Let $\tau>0$ and consider the space $\tilde G = G\times (0,\tau)$ with its  product Lebesgue measure $dxdt$. In $\tilde G$ define the pseudo-distance function
\begin{equation} \label{defde}\tilde d_{\e}( (x,t), (y,s))= \max( d_\e(x,y), \sqrt{|t-s|} ).
\end{equation}
 Proposition \ref{homog-stab} tells us that
$(\tilde G, \tilde d_{\e},dxdt)$ is a homogeneous space with constant independent of $\e\ge 0$. 

In the paper \cite{CCR} it is also shown that a Poincar\'e inequality holds with a choice of a constant which is stable as $\e\to 0$.
The stability of the homogenous space structure and of the constant in the Poincar\'e inequality  are two of the key factors in the proof of Proposition \ref{propCCR}.
\subsection{Stability in the estimates of the Heat Kernel} \label{GaussGauss}The results in this section are of independent interest and concern uniform Gaussian estimates for the heat kernel associated to elliptic regularizations of the Carnot group sub-Laplacians. They will be used in this paper in connection to the Schauder estimates and the higher regularity of the total variation flow.
We will deal with non-divergence form operators similar to those in  (\ref{pdee}),  but with 
constant coefficients. More precisely we will consider the operator 
  \begin{equation}\label{operatore}L_{\epsilon, A} = \p_t - \sum_{i,j=1}^n a_{ij} X_i^\e X_j^\e  \end{equation}
where $A=( a_{ij})_{ij=1, \ldots n}$  is a symmetric,  real-valued $n\times n$ matrix, such that  for  some choice of constants $\Lambda, C_{1}, C_{2}>0$ and for all $\xi\in \R^n$ one has
\begin{equation}\label{emmeLambda}\Lambda^{-1} \sum_{d(i)=1} \xi_i^2 + C_{1} \sum_{d(i)>1} \xi_i^2  \leq \sum_{i,j=1}^n a_{ij} \xi_i \xi_j \leq \Lambda \sum_{d(i)=1} \xi_i^2 + C_{2} \sum_{d(i)>1} \xi_i^2. \end{equation}\label{unif}
The point here is that the constants $C_1,C_2, \Lambda$ are independent of $\e$ and provide {\it $\Lambda-$uniform coercivity  of \eqref{operatore} in the horizontal directions}. Formally, in the sub-Riemannian limit $\e\to 0$  the equation becomes 
 \begin{equation}\label{operator}L_A = \p_t - \sum_{i,j=1}^ma_{ij} X_i X_j.\end{equation}
In order to ensure that  the operator $L_{\e,A}$ (reps. $L_A$) is uniformly elliptic (reps. subelliptic), we will assume that the 
matrix $A=(a_{ij})$ belongs to a  set of the form 
$$ M_{\Lambda} = \{A:  A \; \text{is a } \;  \text{ symmetric } n\times n, \text{ real valued constant matrix,} \quad \quad $$
$$\quad \quad \quad \quad \text{ satisfying }\eqref{emmeLambda} \text{ for some choice of } C_{1} \text{ and }C_{2}\}$$
for  some fixed $\Lambda>0$.

Heat kernel estimates in Nilpotent groups are well known (see for instance \cite{MR865430} and references therein). 
We also refer to \cite{BLU1} where a self contained proof is provided. In our work we will need estimates which are uniform in the variable $\e$ as $\e\to 0$, in the same spirit as the results in \cite{CiMa-F}. 
We will denote $\Gamma_{\e,  A}(x,t)$ the fundamental solution of (\ref{operatore}), with matrix $(a_{ij})$
in $M_{\Lambda}$, and  $\Gamma_{  A}(x,t)$  the fundamental solution of (\ref{operator}).

\begin{prop}\label{uniform heat kernel estimates}
There exists constants $C_\Lambda>0$  depending on $G,\sigma_0, \Lambda$ but independent of $\e$ such that for each $\e>0$, $x\in G$ and $t>0$ one has
\begin{equation}
C_\Lambda^{-1}\frac{e^{-C_\Lambda\frac{d_\e(x,0)^2}{t}}} {|B_\e (0, \sqrt{t})|}\le \Gamma_{\e, A}(x,t)\le C_\Lambda\frac{e^{-\frac{d_\e(x,0)^2}{C_\Lambda t}}} {|B_\e (0, \sqrt{t})|}.
\end{equation}
For $s\in \N$ and $k-$tuple $(i_1,\ldots,i_k)\in \{1,\ldots,n\}^k$ there exists $C_{s,k}>0$ depending only on $k,s,G,\sigma_0, \Lambda$ such that
\begin{equation}
|(\p_t^s X^\e_{i_1}\cdots X^\e_{i_k} \Gamma_{\e, A})(x,t)| \le C_{s,k} t^{-s-k/2} \frac{e^{-\frac{d_\e(x,0)^2}{C_\Lambda t}}} {|B_\e(0, \sqrt{t})|}
\end{equation}
for all $x\in G$ and $t>0$. For any $A_1,A_2\in M_\Lambda$, $s\in \N$ and $k-$tuple $(i_1,\ldots,i_k)\in \{1,\ldots,m\}^k$ there exists $C_{s,k}>0$ depending only on $k,s,G,\sigma_0, \Lambda$ such that
\begin{equation}
|(\p_t^s X_{i_1}\cdots X_{i_k} \Gamma_{\e, A_1})(x,t) - (\p_t^s X_{i_1}\cdots X_{i_k} \Gamma_{\e, A_2})(x,t) |\leq
\end{equation}
$$ \le ||A_1 - A_2||C_{s,k} t^{-s-k/2} \frac{e^{-\frac{d_\e(x,0)^2}{C_\Lambda t}}} {|B_\e(0, \sqrt{t})|},$$ where $||A||^2:=\sum_{i,j=1}^n a_{ij}^2$.

Moreover, for $s\in \N$ and $k-$tuple $(i_1,\ldots,i_k)\in \{1,\ldots, m\}^k$ as $\e\to 0$ one has 
\begin{equation}\label{GetobarG}{ X}_{i_1}\cdots { X}_{i_k} \p_t^s  \Gamma_{\e, A}\to {X}_{i_1}\cdots {X}_{i_k}\p_t^s \Gamma_{A}\end{equation}
uniformly on compact sets and  in a dominated way on all $G$.

\end{prop}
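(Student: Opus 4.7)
The plan is to reduce everything to two $\e$-independent ingredients established just above: the uniform doubling of Proposition \ref{homog-stab} and the uniform $L^2$ Poincar\'e inequality of \cite{CCR}. The operator $L_{\e, A}$ in \eqref{operatore} is left-invariant and, rewritten in divergence form with respect to Lebesgue measure, fits the abstract Grigor'yan--Saloff-Coste framework, in which a doubling metric measure space admitting such a Poincar\'e inequality carries a parabolic Harnack inequality and hence two-sided Gaussian heat-kernel bounds, with constants depending only on the doubling and Poincar\'e constants and the ellipticity ratio $\Lambda$. Applied uniformly in $\e$, this yields the first estimate of the proposition. The constants $\Lambda, C_1, C_2$ in \eqref{emmeLambda} enter only through a rescaling in time, so the resulting $C_\Lambda$ depends on $\Lambda$ but not on the individual matrix $A\in M_\Lambda$ nor on $\e$.

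For the derivative bounds I would use the left-invariance of $X_i^\e$ together with the semigroup identity
\begin{equation*}
\Gamma_{\e, A}(x, t) = \int_G \Gamma_{\e, A}(y, t/2)\,\Gamma_{\e, A}(y^{-1}x, t/2)\,dy.
\end{equation*}
Distributing half of the differentiations onto each factor and bounding one factor in $L^\infty$ via the on-diagonal estimate from step one, the remaining task is to control $L^2$-norms of horizontal derivatives of the kernel. These follow from $\e$-uniform Caccioppoli-type energy inequalities, whose constants depend only on the uniform doubling and Poincar\'e constants. Iterating and using the equation itself to trade time derivatives for spatial ones gives the pointwise Gaussian bound claimed for $\p_t^s X_{i_1}^\e\cdots X_{i_k}^\e\Gamma_{\e, A}$. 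This is the parabolic analogue of the elliptic argument in \cite{CiMa-F}; the main new bookkeeping is to keep the time rescaling compatible with the $\e$-uniform structure.

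The Lipschitz dependence on $A$ then follows from Duhamel's principle: $\Gamma_{\e, A_1} - \Gamma_{\e, A_2}$ has zero initial data and solves
\begin{equation*}
L_{\e, A_1}(\Gamma_{\e, A_1} - \Gamma_{\e, A_2}) = (A_1-A_2)_{ij}\, X_i^\e X_j^\e \Gamma_{\e, A_2},
\end{equation*}
so convolving the right-hand side against $\Gamma_{\e, A_1}$ in space-time and applying the derivative bounds from step two, combined with the standard convolution identity for two Gaussians (which reproduces a Gaussian of the same type with an adjusted constant in the exponent), yields the claim with the prefactor $\|A_1-A_2\|$. The convergence \eqref{GetobarG} as $\e \to 0$ is then essentially a compactness argument: the derivative bounds provide local equicontinuity of the family $\{X_{i_1}\cdots X_{i_k}\p_t^s\Gamma_{\e, A}\}_\e$ on $(G\setminus\{0\})\times(0,\infty)$, so Ascoli--Arzel\`a produces $C^k_{loc}$ subsequential limits; any such limit is a fundamental solution of $L_A$, hence equal to $\Gamma_A$ by uniqueness, and the whole family converges. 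Dominated convergence on all of $G$ uses the $\e$-uniform Gaussian upper bound of step one as the integrable majorant, together with $d_\e(x,0)\to d_0(x,0)$ and $|B_\e(0,\sqrt t)|\to |B_0(0,\sqrt t)|$ from \eqref{ball-box}.

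I expect the main obstacle to be the derivative bounds in step two. Classical Riemannian arguments at fixed $\e>0$ produce constants that blow up as $\e\to 0$, so one must organize the argument so that horizontal derivatives are controlled intrinsically by the sub-Riemannian structure while the non-horizontal fields $\e X_i$ (for $d(i)\ge 2$) are treated as genuinely lower-order perturbations. The stratification relations $[V^1,V^j]=V^{j+1}$ together with the homogeneous polynomial structure \eqref{bch} allow one to absorb commutators $[X^\e_i, L_{\e, A}]$ into horizontal terms with $\e$-free constants, but the accounting across iterations is delicate and is precisely where the parabolic refinement of \cite{CiMa-F} requires genuinely new work.
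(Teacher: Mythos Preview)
Your outline is plausible and internally consistent, but it follows a route that is quite different from the paper's, and considerably more laborious precisely where you flag the difficulty.

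The paper does \emph{not} use the Grigor'yan--Saloff-Coste machinery, Caccioppoli iteration, Duhamel, or an Ascoli--Arzel\`a compactness argument. Instead, it eliminates the $\e$-dependence by an explicit algebraic construction: one lifts to the product group $\bar G=G\times G$, equips it with two sub-Riemannian structures (one encoding $\e$, one not), and exhibits a volume-preserving Lie-group diffeomorphism $\bar F_\e:\bar G\to\bar G$ such that $\bar\Gamma_{\e,A}(\bar x,t)=\bar\Gamma_{0,A}(\bar F_\e(\bar x),t)$ and $\bar d_\e(\bar x,\bar x_0)=\bar d_0(\bar F_\e(\bar x),\bar F_\e(\bar x_0))$. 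All of the Gaussian and derivative bounds for $\bar\Gamma_{\e,A}$ then follow immediately from the corresponding classical bounds for the \emph{fixed} subelliptic heat kernel $\bar\Gamma_{0,A}$, with no $\e$-bookkeeping at all. The Lipschitz dependence on $A$ is handled similarly: after the change of variables, one uses a further Lie-group automorphism $F_A$ (built from $A^{1/2}$ via Rothschild--Stein lifting to a free group) so that $\bar\Gamma_{0,A}=|\det\bar A^{1/2}|\,\bar\Gamma_{0,I}\circ F_A$; the mean value theorem plus an explicit estimate on the Baker--Campbell--Hausdorff expansion of $F_{A_1}(\bar x)^{-1}F_{A_2}(\bar x)$ gives the $\|A_1-A_2\|$ factor. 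Finally, one recovers $\Gamma_{\e,A}$ on $G$ by integrating $\bar\Gamma_{\e,A}$ over the extra factor, and the dominated convergence as $\e\to0$ comes from an elementary comparison $\bar d_0-C_0\le\bar d_\e\le\bar d_0+C_0$ read off directly from exponential coordinates.

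What this buys: your acknowledged ``main obstacle''---controlling iterated horizontal derivatives uniformly in $\e$ while absorbing commutators $[X_i^\e,L_{\e,A}]$---simply does not arise in the paper's argument, because after the change of variables the operator is $\e$-free and one quotes known derivative bounds for a fixed H\"ormander operator. Your approach trades that single algebraic trick for a sequence of analytic steps (uniform Harnack, uniform Caccioppoli at all orders, Duhamel convolution estimates) each of which is standard in isolation but whose $\e$-stability must be checked individually. Conversely, your route is more portable to settings without the rigid group structure that makes the diffeomorphism $\bar F_\e$ available.
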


\bigskip

The proof of our result is directly inspired from the proof in \cite{CiMa-F},  where the time independent case was studied by two of us, and  where a general procedure was introduced for 
handling the dependence on $\epsilon$ and obtain independent estimates in the sub-Laplacian case.

  Let us consider the group $\bar G= G\times G$  defined in terms of $n$ new coordinates $y\in G$, so that points of $\bar G$ will be denoted  $\bar x =(x,y)$.  Denote
by $Y_1,\ldots,Y_n$ a copy of the vectors $X_1,\ldots,X_n$, defined in terms of new variables $y$. The vector fields $(X_i)$ and $(Y_i)$ are defined in the product algebra $\bar \lie = \lie\times \lie$. 

In $\bar G$ we will consider two different families of sub-Riemannian structures:
\begin{itemize} \item  A  sub-Riemannian structure determined by the choice of horizontal vector fields given by
\begin{equation}\label{frame1} ({\bar X}^0_1, \cdots, {\bar X}^0_{m+n}) = (  X_1,\ldots,X_m, Y_1,\ldots,Y_n).\end{equation}
The sub-Laplacian/heat operator associated to this structure is 
\begin{equation}\label{giovanna}
\bar L_{0,A}=\p_t -\sum_{i,j=1}^{m+n}\bar a_{ij} \bar X^0_{i}\bar X^0_{j}=\p_t -\sum_{i,j=1}^ma_{ij} X_{i}X_{j} -\sum_{i,j=1}^na_{ij}  Y_{i}Y_j. \end{equation}
We complete the horizontal frame  to a basis of the whole Lie algebra by adding the commutators of the vectors $(X_i)$: 
$(X_{m+1}, \ldots X_{n})$. 
We denote the  exponential coordinates  of a point $\bar x$ around a point $\bar x_0$ in terms of the full frame through the coefficients $(v^0_i, w^0_i)$ which are defined by
$$\bar x = exp(\sum_{i=1}^m v^0_i X_i + \sum_{i=1}^n w^0_i Y_i + \sum_{i=m+1}^n v^0_i X_i) (\bar x_0 ).$$

\item For every $\e\in [0,1)$ consider a 
sub-Riemannian structure determined by the choice of horizontal vector fields given by
\begin{equation}\label{frame2}
({\bar X}_1^\e, \cdots, {\bar X}_{m+n}^\e)= ( X_1,\ldots,X_m, Y_1,\ldots,Y_m, X_{m+1}^\e+Y_{m+1},\ldots, X_{n}^\e+Y_n).\end{equation}
The sub-Laplacian/heat operator associated to this structure is 
$$
\bar L_{\e, A}=
\p_t-\sum_{i,j=1}^ma_{ij} X_{i}X_{j} -\sum_{i,j=1}^ma_{ij} Y_{i}Y_{j} -\sum_{i>m \text{ or } j>m}a_{ij} (X^\e_i + Y_{i})(X^\e_ j + Y_{j}).$$
Analogously, the exponential coordinates associated to   $L_{\e, A}$ will depend on the given horizontal frame \eqref{frame2} and the family 
$(X_{m+1}, \ldots X_{n})$. The coordinates of a point $\bar x$  are the coefficients $v_i^\e$ and $w_i^\e$  satisfying
\begin{equation}\label{canonical}
\bar x = exp\Big(\sum_{i=1}^m v^\e_i X_i + \sum_{i=1}^m w^\e_i Y_i + \sum_{i=m+1}^n w^\e_i (X^\e_i +Y_i) +\sum_{i=m+1}^n v^\e_i X_i \Big) (\bar x_0 ).\end{equation}
 
\end{itemize}

We denote by  $\bar \Gamma_{0,A}$ and $\bar \Gamma_{\e, A}$ the heat kernels of the corresponding heat operators.
In both structures we define  associated  (pseudo)distance functions $\bar d_0 (\bar x, \bar y)$ and  $\bar d_\e(\bar x, \bar y)$ that are equivalent to those defined in \eqref{gauge} and that assign unit weight  to the corresponding horizontal vectors  while $(X_{m+1}, \ldots X_{n})$
will be weighted according to their degree $d(i)$. \begin{dfn}
For every $\e> 0$ and  $\bar x, \bar x_0 \in \bar G$  define
$$\bar d_\e(\bar x, \bar x_0)= \sum_{i=1}^{m}|v^\e_i|  + \sum_{i=1}^{m}|w^\e_i|  +  \sum_{i=m+1}^{n} \min(|w^\e_i|, |w^\e_i|^{1/d(i)} )+ \sum_{i=m+1}^{n}| v^\e_i|^{1/d(i )}$$
For $\e=0$ and  $\bar x, \bar x_0 \in \bar G$ define
$$\bar d_0(\bar x, \bar x_0)= \sum_{i=1}^{n} (|w^0_i|^{1/d(i)} + | v^0_i|^{1/d(i )})$$

We will denote by $\bar B_\e$ and $\bar B_0$ the corresponding metric balls.

\end{dfn}

\begin{lemma}\label{convergencelem1}
There exists constants $C_\Lambda>0$  depending on $G,\sigma_0, \Lambda$ but independent of $\e$ such that for each $\e>0$, $\bar x\in\bar G$ and $t>0$ one has
\begin{equation}\label{gamma}
C_\Lambda^{-1}\frac{e^{-C_\Lambda\frac{\bar d_\e(\bar x,0)^2}{t}}} {|\bar B_\e (0, \sqrt{t})|}\le \bar \Gamma_{\e, A}(\bar x,t)\le C_\Lambda\frac{e^{-\frac{\bar  d_\e(\bar x,0)^2}{C_\Lambda t}}} {|\bar  B_\e (0, \sqrt{t})|}.
\end{equation}
For $s\in \N$ and $k-$tuple $(i_1,\ldots,i_k)\in \{1,\ldots,n + m\}^k$ there exists $C_{s,k}>0$ depending only on $k,s,G,\sigma_0, \Lambda$ such that
\begin{equation}\label{Xgamma}
|(\p_t^s {\bar  X}^\e_{i_1}\cdots{\bar  X}^\e_{i_k} \bar \Gamma_{\e, A})(\bar x,t)| \le C_{s,k} t^{-s-k/2} \frac{e^{-\frac{\bar d_\e(\bar x,0)^2}{C_\Lambda t}}} {|\bar B_\e(0, \sqrt{t})|}
\end{equation}
for all $\bar x\in \bar G$ and $t>0$.
Moreover, for $s\in \N$ and $k-$tuple $(i_1,\ldots,i_k)\in \{1,\ldots, m\}^k$ as $\e\to 0$ one has 
\begin{equation}\label{barGetobarG}{ \bar X}^\e_{i_1}\cdots {\bar X}^\e_{i_k} \p_t^s \bar \Gamma_{\e, A}\to { \bar X}^0_{i_1}\cdots {\bar X}^0_{i_k}\p_t^s \bar\Gamma_{0,A}\end{equation}
uniformly on compact sets, in a dominated way on all $\bar G$.
\end{lemma}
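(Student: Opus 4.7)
The plan is to adapt to the parabolic setting the Levi parametrix method used in \cite{CiMa-F} for the time-independent case, tracking all constants explicitly so that they depend only on $G$, $\sigma_0$ and $\Lambda$. Three pillars are needed: (i) the doubling of $(\bar G, \bar d_\e, dx\,dy)$ uniformly in $\e$, obtained by applying Proposition \ref{homog-stab} to each factor together with the ball-box comparison \eqref{ball-box}; (ii) a scale-invariant $L^2$ Poincar\'e inequality for the horizontal frame \eqref{frame2} with constant uniform in $\e$, obtained by transplanting the argument of \cite{CCR} to the product setting (the first $2m$ vectors in \eqref{frame2} are $\e$-independent, and the brackets needed to recover the remaining directions are controlled uniformly in $\e$); and (iii) the classical Gaussian bounds of Folland-Stein type for the constant-coefficient frozen sub-Laplacian on a Carnot group.

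For \eqref{gamma}, I would freeze the coefficient matrix $A$ at a base point in $\bar G$ and compare $\bar \Gamma_{\e, A}$ to the explicit heat kernel of the frozen operator, whose metric balls are comparable to $\bar B_\e$ with constants stable in $\e$. The two-sided Gaussian bounds on the frozen kernel, combined with Duhamel iteration in the Levi method, produce the claimed bounds on $\bar \Gamma_{\e, A}$. The derivative estimates \eqref{Xgamma} follow by applying $\bar X^\e_{i_1} \cdots \bar X^\e_{i_k} \partial_t^s$ to each term of the Levi expansion: each differentiation acting on the frozen kernel produces a polynomial factor of order $t^{-s - k/2}$ while preserving the Gaussian tail. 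Absolute convergence of the iteration, guaranteed by pillars (i) and (ii), lets one commute derivatives past the sum to obtain estimates with constants depending only on $k, s, G, \sigma_0, \Lambda$.

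For the convergence statement \eqref{barGetobarG}, I would combine the uniform bounds with compactness. By \eqref{Xgamma} applied with one additional horizontal derivative, the family $\bar X^\e_{i_1} \cdots \bar X^\e_{i_k} \partial_t^s \bar \Gamma_{\e, A}$ is equicontinuous on compact subsets of $\bar G \times (0, \infty)$; note that for the indices $i_j \in \{1, \ldots, m\}$ relevant here the operators $\bar X^\e_{i_j} = X_{i_j}$ are themselves $\e$-independent. An Arzel\`a-Ascoli argument then extracts a subsequential limit $\tilde \Gamma$, which solves $\bar L_{0, A} \tilde \Gamma = 0$ with the correct Dirac initial datum: in the coefficients of $\bar L_{\e, A}$ the terms involving $X^\e_i + Y_i$ converge to those involving $Y_i$ alone (since $X^\e_i = \e X_i \to 0$ for $i > m$), so any smooth subsequential limit solves $\bar L_{0, A} \tilde \Gamma = 0$. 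By uniqueness of the fundamental solution, $\tilde \Gamma = \bar \Gamma_{0, A}$, and the whole family converges; domination on all of $\bar G$ is provided by the Gaussian upper bound in \eqref{gamma}.

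The main obstacle I anticipate is verifying that the Levi iteration converges with constants that are uniform as $\e \to 0$. The frame \eqref{frame2} degenerates in a non-trivial way, but the exponential coordinates \eqref{canonical} and the distance $\bar d_\e$ are built precisely so that the nilpotent approximation collapses to that of the $\e = 0$ limit in a controlled manner. Showing that the polynomial commutators entering the Levi kernel inherit this stability uniformly in $\e$, with Gaussian tails preserved at each iteration step, is the delicate bookkeeping step and is essentially the parabolic counterpart of the work carried out in \cite{CiMa-F}.
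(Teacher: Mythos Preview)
Your plan takes a fundamentally different and much harder route than the paper, and one step is misconceived. The matrix $A$ is \emph{constant}, so there is nothing to freeze and no Levi--Duhamel iteration to perform: $\bar L_{\e,A}$ is already a left-invariant constant-coefficient operator on $\bar G$. The $\e$-dependence sits entirely in the choice of horizontal frame $\bar X^\e_i$, not in the coefficients, so the parametrix machinery you invoke is idle.

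The paper's proof exploits this via a single change of variables. The lifted frames \eqref{frame1} and \eqref{frame2} are constructed precisely so that the linear map $T_\e$ sending $\bar X^0_i\mapsto\bar X^\e_i$ induces a volume-preserving diffeomorphism $\bar F_\e=\exp\circ\, T_\e\circ\log$ of $\bar G$ satisfying $\bar d_\e(\bar x,\bar x_0)=\bar d_0(\bar F_\e(\bar x),\bar F_\e(\bar x_0))$ and $\bar\Gamma_{\e,A}(\bar x,t)=\bar\Gamma_{0,A}(\bar F_\e(\bar x),t)$, with analogous identities for all derivatives along the $\bar X^\e_i$. Estimates \eqref{gamma} and \eqref{Xgamma} then follow \emph{immediately} from the classical, $\e$-free Gaussian bounds for the fixed sub-Riemannian heat kernel $\bar\Gamma_{0,A}$ (e.g.\ \cite{BLU1}). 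The convergence \eqref{barGetobarG} is a direct consequence of $\bar F_\e\to\mathrm{id}$, and domination on all of $\bar G$ comes from the elementary comparison $|\bar d_\e-\bar d_0|\le C_0$ obtained by writing the $\e$-coordinates in terms of the $0$-coordinates.

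Your alternative --- uniform doubling plus Poincar\'e on $(\bar G,\bar d_\e)$ feeding into Moser or Grigor'yan--Saloff-Coste machinery --- could in principle recover \eqref{gamma}, but (a) transplanting the Poincar\'e inequality of \cite{CCR} to the mixed frame \eqref{frame2} is not automatic, since the vectors $X^\e_i+Y_i$ couple the two factors of $\bar G$ and do not fit the product structure you invoke; (b) that machinery gives pointwise Gaussian bounds but not directly the higher-order derivative estimates \eqref{Xgamma}; and (c) your Arzel\`a--Ascoli argument for \eqref{barGetobarG} is sound in outline but superfluous once the explicit identity $\bar\Gamma_{\e,A}=\bar\Gamma_{0,A}\circ\bar F_\e$ is available. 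The whole purpose of lifting to $\bar G$ is to make this change-of-variables trick work; you should use it rather than rebuild the Gaussian estimates from scratch.
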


\begin{proof}
In order to estimate the fundamental solution of the operators $\bar L_{\e,A}$ in terms of $\bar L_{0,A} $, we define a volume preserving change of variables on the Lie algebra and on $G$:
Define $\bar F_\e:\bar G\to \bar  G$ as  $\bar F_\e (\bar x)= \exp( T_\e( \log(\bar x)) $ with
$$T_\e ({\bar  X}^0_i)={\bar X}^\e_i\text{ for } i=1,\ldots,m+n.$$ 
 By definition the relation between the distances $\bar d_0$ and $\bar d_\e$ is expressed by the formula
\begin{equation}\label{distances}\bar d_\e (\bar x, \bar x_0)=\bar d_0( \bar F_\e (\bar x), \bar  F_\e(\bar x_0)).\end{equation}

Analogously we also have
\begin{equation}\label{changegamma}\bar \Gamma_{\e, A} (\bar x,t)=\bar \Gamma_{0,A}(\bar F_\e(\bar x), t),
\end{equation}
$$ X_i\bar \Gamma_{\e, A} (\bar x,t)= X_i \bar \Gamma_{0,A}(\bar F_\e(\bar x), t),\text{ for } i=1, \cdots,  m$$and$$
 (X_i^\e + Y_i)\bar \Gamma_{\e, A} (\bar x,t)= Y_i \bar \Gamma_{0,A}(\bar F_\e(\bar x), t),\text{ for } i=m+1, \cdots,  n ,$$
with similar identities holding for the  iterated derivatives.

In view of the latter, assertions (\ref{gamma}) and (\ref{Xgamma}) immediately follow from the well known estimates of $\bar\Gamma_{0,A}$ (see for instance there references cited above or  \cite{BLU1}).
The pointwise convergence (\ref{barGetobarG}) is also an immediate consequence of  \eqref{distances} and \eqref{changegamma}.
In order to prove the dominated convergence result we need to relate the distances $\bar d_0$ and $\bar d_\e$: Expressing the exponential coordinates $v_i^\e,w_i^\e$ in terms of $v_i^0, w_i^0$ one easily obtains
 $$\bar d_\e( \bar x,  \bar x_0)= \sum_{i=1}^m(|v_i^0| + |w_i^0|) + \sum_{i=m+1}^n \Big( |v_i^0- \e w_i^0|^{1/d(i)} + \min(|w_i^0|, |w_i^0|^{1/d(i)} ) \Big)$$
so that for all\footnote{This estimate indicates the well known fact that at large scale the Riemannian approximating distances are equivalent to the sub-Riemannian distance}
  $\bar x, \bar x_0\in \bar G$ \begin{equation}
\label{tocheck}
\bar d_0(\bar x, \bar x_0) - C_0\leq \bar d_\e( \bar x, \bar x_0)\leq \bar d_0(\bar x, \bar x_0)+ C_0\end{equation}
where $C_0$ is independent of $\e$. The latter and (\ref{Xgamma}) imply dominated convergence on $\bar G$.
\end{proof}

\begin{lemma}\label{convergencelem}
For $s\in \N$ and $k-$tuple $(i_1,\ldots,i_k)\in \{1,\ldots,m\}^k$ there exists $C_{s,k}>0$ depending only on $k,s,G,\sigma_0, \Lambda$ such that
 depending on $G,\sigma_0, \Lambda$ but independent of $\e$ such that for each $A_1,A_2\in M_\Lambda$, $\e>0$, $\bar x\in \bar G$ and $t>0$ one has
\begin{equation}\label{giovanna1}
|(\p_t^s {\bar X}^\e_{i_1}\cdots {\bar X}^\e_{i_k} \bar \Gamma_{\e, A_1})(\bar x,t) - (\p_t^s { \bar X}^\e_{i_1}\cdots {\bar  X}^\e_{i_k} \bar\Gamma_{\e, A_2})(\bar x,t) |\leq
\end{equation}
$$ \le ||A_1- A_2||C_{s,k} t^{-s-k/2} \frac{e^{-\frac{\bar d_\e(\bar x,0)^2}{C_\Lambda t}}} {|\bar B_\e(0, \sqrt{t})|}.$$
\end{lemma}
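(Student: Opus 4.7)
The plan is to use a Duhamel-type perturbative representation together with the uniform Gaussian bounds already established in Lemma \ref{convergencelem1}. Since $\bar L_{\e,A_1}\bar\Gamma_{\e,A_1}=\bar L_{\e,A_2}\bar\Gamma_{\e,A_2}=\delta_{(0,0)}$ as distributions, the difference $K:=\bar\Gamma_{\e,A_1}-\bar\Gamma_{\e,A_2}$ solves
\[
\bar L_{\e,A_1} K = (\bar L_{\e,A_2}-\bar L_{\e,A_1})\bar\Gamma_{\e,A_2}=\sum_{i,j}(a^{(1)}_{ij}-a^{(2)}_{ij})\bar X^\e_i\bar X^\e_j\bar\Gamma_{\e,A_2},
\]
with zero Cauchy data. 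Representing this solution by $\bar\Gamma_{\e,A_1}$, I obtain
\[
K(\bar x,t)=\int_0^t\!\!\int_{\bar G}\bar\Gamma_{\e,A_1}(\bar y^{-1}\bar x,t-s)\sum_{i,j}(a^{(2)}_{ij}-a^{(1)}_{ij})\bar X^\e_i\bar X^\e_j\bar\Gamma_{\e,A_2}(\bar y,s)\,d\bar y\,ds.
\]

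Next I apply the operator $\p_t^s\bar X^\e_{i_1}\cdots\bar X^\e_{i_k}$ in the $\bar x$ variable. Since the $\bar X^\e_\ell$ are left-invariant, spatial derivatives in $\bar x$ pass under the group convolution to become derivatives in the first argument of $\bar\Gamma_{\e,A_1}$. To avoid the non-integrable singularity at $s=0$ coming from the two derivatives on $\bar\Gamma_{\e,A_2}$, I split the time integral at $t/2$. On $[t/2,t]$ I keep all $k+2$ spatial derivatives and the $s$ time derivatives on $\bar\Gamma_{\e,A_1}(\cdot,t-s)$ (so no singularity at the endpoints since $s\ge t/2$, and the two derivatives on $\bar\Gamma_{\e,A_2}$ each contribute a factor of $s^{-1/2}\le (t/2)^{-1/2}$). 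On $[0,t/2]$ I use the semigroup representation $\bar\Gamma_{\e,A_2}(\cdot,s)=\bar\Gamma_{\e,A_2}(\cdot,s/2)*_{\bar G}\bar\Gamma_{\e,A_2}(\cdot,s/2)$ to split the two derivatives on $\bar\Gamma_{\e,A_2}$, one onto each factor, so every bound gives at worst $s^{-1/2}$ per derivative; alternatively, since $\bar L_{\e,A_i}$ is self-adjoint and the $\bar X^\e$ admit adjoints differing by smooth lower-order terms handled by \eqref{Xgamma}, one integrates by parts in $\bar y$ to redistribute both derivatives onto $\bar\Gamma_{\e,A_1}(\bar y^{-1}\bar x,t-s)$ where $t-s\ge t/2$ is bounded below.

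In each sub-integral I now apply the pointwise bounds \eqref{gamma}--\eqref{Xgamma} of Lemma \ref{convergencelem1}, so the integrand is bounded by the product of two Gaussian-type kernels in $\bar d_\e$ of different scales. The $\bar y$-integration is handled by a convolution-of-Gaussians estimate
\[
\int_{\bar G}\frac{e^{-c\bar d_\e(\bar y^{-1}\bar x,0)^2/\tau_1}}{|\bar B_\e(0,\sqrt{\tau_1})|}\cdot\frac{e^{-c\bar d_\e(\bar y,0)^2/\tau_2}}{|\bar B_\e(0,\sqrt{\tau_2})|}\,d\bar y\le C\,\frac{e^{-c'\bar d_\e(\bar x,0)^2/(\tau_1+\tau_2)}}{|\bar B_\e(0,\sqrt{\tau_1+\tau_2})|},
\]
valid with constants independent of $\e$ thanks to the quasi-triangle inequality for $\bar d_\e$ and the uniform doubling property of Proposition \ref{homog-stab}. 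The remaining one-dimensional integral in $s$ then produces the factor $t^{-s-k/2}$ while $\tau_1+\tau_2=t$ yields the Gaussian at scale $\sqrt{t}$.

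Combining the two halves and pulling out the factor $\|A_1-A_2\|$ from the coefficient differences gives precisely \eqref{giovanna1}. The main technical step is the uniformity in $\e$ of the convolution-of-Gaussians estimate; this is exactly what makes the $\e$-independent doubling of Proposition \ref{homog-stab} together with the uniform bounds of Lemma \ref{convergencelem1} indispensable, and it is the only place where care is required to ensure that the constants do not blow up as $\e\to 0$.
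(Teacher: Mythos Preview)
Your Duhamel/parametrix argument is a valid analytic route to this estimate, but it is genuinely different from the paper's proof. The paper first reduces to $\e=0$ via the volume-preserving change of variables $\bar F_\e$ from Lemma~\ref{convergencelem1}, then (after a Rothschild--Stein lifting to a free group) exploits the group automorphism $F_A$ that intertwines $\bar L_{0,A}$ with $\bar L_{0,I}$, so that $\bar\Gamma_{0,A}(\bar x,t)=|\det\bar A^{1/2}|\,\bar\Gamma_{0,I}(F_A(\bar x),t)$. The difference $\bar\Gamma_{0,I}(F_{A_1}(\bar x),t)-\bar\Gamma_{0,I}(F_{A_2}(\bar x),t)$ is then handled by the Mean Value Theorem, the key input being that the exponential coordinates of $F_{A_1}(\bar x)$ around $F_{A_2}(\bar x)$ are controlled by $\|A_1-A_2\|$ via the Baker--Campbell--Hausdorff formula (Proposition~\ref{TAestimate}). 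Your approach bypasses the free lifting and the appendix entirely, trading them for the Gaussian-convolution estimate; it is thus more robust and would work in any doubling metric-measure setting with uniform two-sided Gaussian bounds, whereas the paper's argument is specific to the Carnot structure but makes the dependence of the constants on~$\Lambda$ more transparent. One minor correction to your write-up: in the time-splitting the roles of $[0,t/2]$ and $[t/2,t]$ are swapped---you want all derivatives on $\bar\Gamma_{\e,A_1}(\cdot,t-s)$ on the interval where $t-s\ge t/2$, namely $[0,t/2]$, and you integrate by parts in $\bar y$ (which produces \emph{right}-invariant derivatives, so you also need the symmetry $\bar\Gamma(\bar z,t)=\bar\Gamma(\bar z^{-1},t)$ coming from self-adjointness) to move derivatives onto $\bar\Gamma_{\e,A_2}(\cdot,s)$ on $[t/2,t]$.
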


\begin{proof}
In view of \eqref{distances} and  (\ref{changegamma}) it is sufficient to  establish the result for $\bar \Gamma_{0,A_1} $ and $\bar \Gamma_{0,A_2}$, thus eliminating the dependence on $\e$. 

Although the vector fields $\bar X^0_i$, $i=1,...,m+n$ are not free, we can invoke the Rothschild and Stein lifting theorem \cite{Roth:Stein} and lift them to a new family of free vector fields in a free Carnot group. For the sake of simplicity we will continue to use the same notation $\bar X^0_i$, $i=1,...,m+n$ to denote such family of free vector fields. To recover the desired estimate  \eqref{giovanna1} for the original vector fields  one needs to argue through projections down to the original space, exactly as done in \cite{nsw} and \cite{CiMa-F}. A standard argument (see for instance \cite{BLU2}) yields that for every $A\in M_\Lambda$, if we set $\bar A$ as in \eqref{giovanna}  then there exists 
a Lie group authomorphism $F_A$ such that 
$$(\sum_{j=1}^{m+n} ( \bar A^{1/2})_{ij} \bar X^0_j )(u \circ F_A) =(\bar X_i u)\circ F_A,$$ 
and
 $$\bar \Gamma_{0,A} (\bar x, t)= |\det \bar A^{1/2}| \bar \Gamma_{0,I} (F_A(\bar x), t) $$
where $I$ denotes the identity matrix. The authomorphism $F_A $ is defined   by
$$F_A (\bar x)= \exp( T_A( \log(\bar x)), \text{ with } T_A(\bar X^0_i) = \sum_{j=1}^{m+n} (A^{1/2})_{ij}\bar X^0_j \text{ for } i=1,\ldots,m$$
and is extended to the whole Lie algebra as morphism (see the Appendix for more details).
As in \eqref{canonical},  we denote by $(v_i, w_i)$ the canonical coordinates of $F_{A_1}(\bar x)$ around 
$F_{A_2}(\bar x)$, then the Mean Value Theorem  yields
$$ (\p_t^s { \bar X}^0_{i_1}\cdots{\bar  X}^0_{i_k}  \bar \Gamma_{I})(F_{A_1}(\bar x), t)-(\p_t^s {\bar  X}^0_{i_1}\cdots{ \bar X}^0_{i_k}\bar\Gamma_{I})(F_{A_2}(\bar x), t)= $$$$
 \sum_{i=1}^{n} (v_i X_i + w_iY_i)(\p_t^s { \bar X}^0_{i_1}\cdots { \bar X}^0_{i_k}\bar\Gamma_{I})(\bar y, t).$$
By the result in the Appendix, the operators $v_i X_i + w_iY_i$ are zero order differential operators 
whose coefficients can be estimated by $||A_1 - A_2||$. The conclusion follows  by virtue of   Proposition \ref{TAestimate}.
\end{proof}

We conclude this section with the proof of the main result Proposition \ref{uniform heat kernel estimates}.

\bigskip

\noindent{\it Proof of Proposition \ref{uniform heat kernel estimates}.} 
 From the definition of fundamental solution 
we  have that   $$\Gamma_{0,A}( x,t)=\int_G \bar \Gamma _{0,A}((x, y),t) dy, \quad  \text{ and } \quad\Gamma_{\e, A} (x,t)=\int_G \bar \Gamma_{\e, A} ((x, y),t) dy, $$
for any $x\in G$  and $t>0$.
In view of the (global) dominated convergence of  the derivatives of $\bar \Gamma_{\e, A}$ to  the corresponding derivatives of $\bar \Gamma_{0,A}$ as $\e\to 0$, we deduce that  $$\int_G \bar \Gamma_{\e, A} ((x, y),t) dy \rightarrow \int_G \bar \Gamma_{0,A} (( x, y),t) dy$$ as   $\e\rightarrow  0$.
The Gaussian estimates of $ \Gamma_{\e, A}$ follow from the corresponding estimates on $\bar \Gamma_{\e, A}$ and the fact that in view of \eqref{tocheck}, $$\bar d_\e((x, y), (x_0, y_0))\geq \bar d_0((x, y), (x_0, y_0)) - C_0\geq  d_0( x, x_0) +  d_0(y, y_0) - C_0\geq$$
$$\geq  d_\e(x, x_0) +  d_\e(y, y_0) -3 C_0$$
Indeed the latter shows that there exists a constant $C>0$ depending only on $G, \sigma_0$ such that for every $x\in G$,
$$\int_G e^{- \frac{d^2_\e((x, y), (x_0, y_0))}{t}}dy \leq  C e^{-\frac{d^2_\e(x, x_0)}{t}}
 \int_G e^{- \frac{d^2_\e( y , y_0)}{t}}dy \leq  C e^{-\frac{d^2_\e(x, x_0)}{t}}.$$ 
The conclusion follows at once

\section{Gradient estimates}

In this section we prove Theorem \ref{global estimates}. The proof is carried out in two steps: First we use the maximum principle to establish interior $L^\infty$ bounds for the full gradient of the solution $\nabla_1 u$ of \eqref{ivp} with respect to the Lipschitz norm of $u$ on the parabolic boundary. Next, we construct appropriate barriers and invoke the comparison principle established in \cite{CC} to prove boundary gradient estimates. The combination of the two will yield the uniform global Lipschitz bounds.

\subsection{Interior gradient estimates} Recalling that the right invariant vector fields $X_j^r$ commute with the left invariant frame $X_i$, $i=1,\ldots ,n$ it is easy to show through a direct computation the following result.

\begin{lemma}\label{derivatedestre}
Let $u_\e\in C^3(Q)$ be a solution to \eqref{pdee} and denote
 $v_0=\partial_t u$, $v_i = X_i^{r}u$ for $i=i, \ldots, n$. Then
 for every $h=0,\ldots ,n$ one has that $v_h$ is a solution of

\begin{equation}\label{diff-eq}\p_t v_h= X_i^\e ( a_{ij} X_jv_h )= a_{ij}^\e(\nabla_\e u_\e) X_i^\e X_j^\e v_h + \p_{\xi_k} a_{ij}^\e(\nabla_\e u)X_i^\e X_j^\e u_\e X_k^\e v_h,\end{equation}
where
$$ a_{ij}^\e(\xi) = \frac{1}{\sqrt{1+|\xi|^2}}\Big(\delta_{ij}- \frac{\xi_i \xi_j}{1+|\xi|^2}\Big).$$
\end{lemma}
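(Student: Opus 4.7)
The plan is a direct chain-rule computation. Two commutation properties do all the work: left-invariant and right-invariant fields commute, $[X_h^r, X_i^\e]=0$, and the time derivative commutes with every spatial vector field, $[\p_t, X_i^\e]=0$. One also needs the fact that $\p_{\xi_k}a_{ij}^\e$ is fully symmetric in $(i,j,k)$, which is immediate from the explicit formula $a_{ij}^\e(\xi)=W_\e^{-1}(\delta_{ij}-\xi_i\xi_j/(1+|\xi|^2))$.

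To obtain the first equality $\p_t v_h = X_i^\e(a_{ij}^\e X_j^\e v_h)$, I would apply $X_h^r$ (resp.\ $\p_t$, in the case $h=0$) to the divergence form of \eqref{pdee}, namely $\p_t u_\e=\sum_i X_i^\e(X_i^\e u_\e/W_\e)$. The commutation $[X_h^r,X_i^\e]=0$ lets $X_h^r$ pass through the outer $X_i^\e$, and a short computation, using $X_h^r W_\e = W_\e^{-1}\sum_k (X_k^\e u_\e)(X_k^\e v_h)$ (again by commuting), gives $X_h^r(X_i^\e u_\e/W_\e)=a_{ij}^\e(\nabla_\e u_\e)X_j^\e v_h$. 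Substituting yields the first equality. The case $v_0=\p_t u_\e$ is handled identically with $\p_t$ in place of $X_h^r$.

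The second equality can then be read off either by differentiating the non-divergence form of \eqref{pdee} by $X_h^r$ (or $\p_t$) and commuting through $X_i^\e X_j^\e$, or by expanding $X_i^\e(a_{ij}^\e X_j^\e v_h)$ via product and chain rules: this produces $a_{ij}^\e X_i^\e X_j^\e v_h + (\p_{\xi_k}a_{ij}^\e)(X_i^\e X_k^\e u_\e)X_j^\e v_h$, and a relabeling of the summation indices using the symmetry of $\p_{\xi_k}a_{ij}^\e$ in $(j,k)$ puts the nonlinear term in the form stated in the lemma.

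There is essentially no obstacle: the proof is bookkeeping powered by the commutation relations above. The one structural point worth emphasising is that one must differentiate by the \emph{right}-invariant fields $X_h^r$ (never the left-invariant $X_h^\e$), which is precisely what allows the equations for the $v_h$ to close up with exactly the same coefficient matrix $a_{ij}^\e(\nabla_\e u_\e)$ as the equation for $u_\e$ itself; this feature is crucial for the subsequent application of the divergence-form Harnack inequality of \cite{CCR} to derive interior $C^{1,\alpha}$ estimates.
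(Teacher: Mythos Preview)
Your proposal is correct and is exactly the ``direct computation'' the paper has in mind: it simply invokes $[X_h^r,X_i^\e]=0$ (and $[\p_t,X_i^\e]=0$) and differentiates the divergence and non-divergence forms of \eqref{pdee}, just as you describe. The symmetry of $\p_{\xi_k}a_{ij}^\e$ in $(i,j,k)$ that you use to reconcile the two forms is indeed immediate from the explicit formula, so there is no gap.
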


Note that in order to prove $L^\infty$ bounds on the horizontal gradient of solutions of \eqref{ivp} one {\it cannot} invoke Lemma \ref{derivatedestre} with differentiation the horizontal frame, because
these vector fields do not commute. The argument below bypasses this difficulty by using the algebraic relation between right and left invariant vector fields.

\begin{prop}\label{u_t}
Let $u_\e\in C^3(Q)$ be a solution to \eqref{ivp} with $\Om$ bounded. There exists $C=C(G,||\varphi||_{C^2( \Om )})>0$
such that for every compact subset $K\subset \subset \Om$ one has
$$\sup_{K \times [0,T)} |\nabla_1 u|  \leq \sup_{\partial_p Q}(|\nabla_1 u| + |\partial _ t u|),$$
where $\nabla_1$ is the  full $\s_1-$Riemannian gradient.
\end{prop}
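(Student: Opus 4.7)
The plan is to differentiate the equation \eqref{pdee} in the right-invariant directions and in time, apply the maximum principle to the resulting linear parabolic equations, and finally transfer the pointwise estimates on right-invariant derivatives back to the left-invariant gradient via the algebraic change-of-frame between the two systems of vector fields.

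First, by Lemma \ref{derivatedestre}, each of the functions $v_0 = \partial_t u_\e$ and $v_h = X_h^r u_\e$ for $h = 1,\ldots,n$ satisfies the linear equation \eqref{diff-eq}. Under the hypothesis $u_\e \in C^3(Q)$ the coefficients $a_{ij}^\e(\nabla_\e u_\e)$ and $\partial_{\xi_k}a_{ij}^\e(\nabla_\e u_\e) X_i^\e X_j^\e u_\e$ are bounded and continuous in $Q$, and the principal part is uniformly elliptic (for any fixed $\e>0$). Crucially, \eqref{diff-eq} carries no zero-order term in $v_h$, so the classical weak maximum principle for linear parabolic operators applies and yields
\begin{equation*}
\sup_{Q}|v_h| \;\le\; \sup_{\partial_p Q} |v_h|, \qquad h=0,1,\ldots,n.
\end{equation*}

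Next I want to replace the right-invariant derivatives with left-invariant ones. Using the exponential coordinate formula \eqref{bch} for the left-invariant $X_i$ and the analogous representation for the right-invariant $X_i^r$, one sees that at each point $x \in G$ the two bases of $T_x G$ are related by $X_i(x) = \sum_{j=1}^n c_{ij}(x) X_j^r(x)$, where each $c_{ij}$ is a polynomial in $x$ of degree at most $r-1$ depending only on the group structure. In particular, on any compact $K \subset\subset \Omega$ these polynomials are uniformly bounded by a constant $C_K = C_K(G,\Omega)$. Hence for every $(x,t) \in K \times [0,T)$,
\begin{equation*}
|\nabla_1 u_\e(x,t)|^2 = \sum_{i=1}^n |X_i u_\e(x,t)|^2 \;\le\; C_K \sum_{j=1}^n |X_j^r u_\e(x,t)|^2.
\end{equation*}

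Combining this with the maximum principle from the first step, and then converting back on the parabolic boundary using the inverse change of frame $X_j^r = \sum_i \tilde c_{ji}(x) X_i$ (again with polynomially bounded coefficients on $\bar\Omega$), I obtain
\begin{equation*}
\sup_{K\times[0,T)} |\nabla_1 u_\e| \;\le\; C \sum_{j=1}^n \sup_{\partial_p Q}|X_j^r u_\e| \;\le\; C' \sup_{\partial_p Q}\bigl(|\nabla_1 u_\e| + |\partial_t u_\e|\bigr),
\end{equation*}
where the term $|\partial_t u_\e|$ enters precisely because $v_0 = \partial_t u_\e$ must be included on the initial slice $\Omega \times \{0\}$; note that on the lateral boundary $\partial \Omega \times (0,T)$ the datum $\varphi$ is $t$-independent, so $\partial_t u_\e = 0$ there and only the spatial contribution matters. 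The main subtlety of the argument is conceptual rather than technical: one must differentiate by \emph{right}-invariant fields (which commute with the left-invariant operator in \eqref{pdee}) rather than by $X_i^\e$ themselves, since the latter would produce uncontrolled commutator terms, and only at the very end does one return to the left-invariant gradient through the polynomial change of basis on the compact set $K$.
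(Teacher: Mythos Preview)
Your argument is correct and follows essentially the same strategy as the paper's: differentiate along the right-invariant frame (which commutes with the left-invariant operator), apply the weak maximum principle to the resulting linear equation \eqref{diff-eq}, and then pass back to the left-invariant gradient via the polynomial change of basis between the two frames, which is bounded on compact sets. The only difference is organizational: the paper proceeds by downward induction on the layer index, exploiting the triangular relation $X_i = X_i^r + (\text{combination of vectors in layers } > d(i))$, whereas you handle all $v_h$, $h=1,\dots,n$, simultaneously; your remark about why $|\partial_t u_\e|$ must appear on the right is slightly off (the spatial estimate does not actually require $v_0$), but this does not affect the validity of the bound as stated.
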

\begin{proof} The proof is similar to the argument in \cite[Proposition 5.1]{CC} and is based on the trivial observation that left-invariant vector fields at the layer $k$ can be written as linear combination of right-invariant vector fields at level $k$ and left (or right) invariant vector fields at levels $k+1, k+2, \ldots , r$. The first step consists in  differentiating the equation in \eqref{ivp} in the direction of the highest layer $V^r$ (which commutes with all the Lie algebra) and invoke 
 the weak maximum principle applied to \eqref{diff-eq} to show
 \begin{equation}\label{stepr}
 \sup_{K\times[0.T)} \sum_{d(i)=r} |X_i u| \le \sup_{\partial_p Q}(|\nabla_1 u| + |\partial _ t u|).
 \end{equation} Next one proceeds in a similar fashion, iterating the argument by differentiating the equation in \eqref{ivp} along the right invariant vector fields in the layer $V^{r-1}$. The maximum principle and \eqref{diff-eq} yield $L^\infty$ bounds on $X_i^ru$ with $d(X_i^r)=r-1. $
 Coupling such bounds with \eqref{stepr} one obtains
  \begin{equation}\label{stepr-1}
 \sup_{K\times[0.T)} \sum_{d(i)=r-1} |X_i u| \le \sup_{\partial_p Q}(|\nabla_1 u| + |\partial _ t u|).
 \end{equation} 
 The proof now follows by induction on the step of the derivatives. \end{proof}

%

\bigskip

\subsection{Linear barrier functions}

In \cite[Section 4.2]{CC} it is shown that in a step two Carnot group coordinate planes (i.e. images under the exponential of level sets of the form $x_k=0$) solve the minimal surface equation $h_0=0$. In the same paper
it is also shown that this may  fail for step three or higher. In the construction of the barrier function  we will need the following slight refinement of this result,

\begin{lemma}\label{planes are flat}
Let $G$ be a step two Carnot group. If $f:G\to \R$ is linear (in exponential coordinates) then for every $\e\ge 0$,  the matrix with entries
$X_i^\e X_j^\e f$ is anti-symmetric, in particular every level set of $f$ satisfies $h_\e=0$.
\end{lemma}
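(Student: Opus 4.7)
The plan is to exploit the explicit form of the left-invariant frame in exponential coordinates. Because $G$ has step two, the Baker--Campbell--Hausdorff expansion truncates to $x\cdot y = x+y+\tfrac{1}{2}[x,y]$, and differentiating $t\mapsto f(x\cdot te_i)$ at $t=0$ yields
\begin{equation*}
X_i = \p_i + \tfrac{1}{2}\!\!\sum_{d(k)=1,\,d(\alpha)=2}\!\! c_{ki}^\alpha\, x_k\, \p_\alpha \;\text{ if } d(i)=1, \qquad X_\alpha = \p_\alpha \;\text{ if } d(\alpha)=2,
\end{equation*}
where the $c_{ij}^\alpha$ are the structure constants determined by $[X_i,X_j]=\sum_\alpha c_{ij}^\alpha X_\alpha$ for $d(i)=d(j)=1$, and satisfy $c_{ij}^\alpha = -c_{ji}^\alpha$.

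Next I would compute $X_iX_j f$ for a linear $f(x)=\sum_\mu c_\mu x_\mu$ by a short case split on the degrees. For $d(j)=2$, $X_jf = c_j$ is constant, so $X_iX_jf=0$. For $d(j)=1$, $X_jf$ is a linear function of the horizontal coordinates only, so the vertical-derivative term inside $X_i$ annihilates it, leaving $X_iX_jf = \p_i(X_jf) = \tfrac{1}{2}\sum_\alpha c_{ij}^\alpha c_\alpha$ when $d(i)=1$ and $0$ when $d(i)=2$. The only nonzero block of $(X_iX_jf)_{ij}$ is the horizontal-horizontal one, and antisymmetry there is immediate from $c_{ij}^\alpha = -c_{ji}^\alpha$. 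Since $X_i^\e$ differs from $X_i$ only by a scalar factor $\e$ when $d(i)=2$, the antisymmetry is inherited by the full matrix $(X_i^\e X_j^\e f)_{ij}$.

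For the ``in particular'' clause, I would expand the mean curvature of the level set $\{f=c\}$ in $(G,\s_\e)$:
\begin{equation*}
h_\e = \sum_{i=1}^n X_i^\e\!\left(\frac{X_i^\e f}{|\nabla_\e f|}\right) = \frac{\sum_{i} X_i^\e X_i^\e f}{|\nabla_\e f|} - \frac{\sum_{i,j} X_i^\e f\, X_j^\e f\, X_i^\e X_j^\e f}{|\nabla_\e f|^3}.
\end{equation*}
Both sums vanish: the first because the diagonal of an antisymmetric matrix is zero, and the second because it contracts the symmetric tensor $X_i^\e f\, X_j^\e f$ with the antisymmetric $(X_i^\e X_j^\e f)$.

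The only point needing care is conceptual rather than technical: the argument depends essentially on the BCH expansion truncating, so that applying $X_iX_j$ to a linear $f$ sees only the bracket term, which is itself antisymmetric. In step three or higher the BCH corrections add cubic-and-higher polynomial terms to the $X_i$, producing symmetric contributions to $X_iX_jf$ even for linear $f$, which is precisely why the analogous statement fails beyond step two, as observed in the paper.
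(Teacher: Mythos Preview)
Your proof is correct and follows essentially the same route as the paper's: both arguments write out the left-invariant frame explicitly via the step-two BCH formula, read off the antisymmetry of $(X_i^\e X_j^\e f)_{ij}$ from the antisymmetry $c_{ij}^\alpha=-c_{ji}^\alpha$ of the structure constants, and then kill $h_\e$ by contracting this antisymmetric matrix against a symmetric one. The only cosmetic difference is that you expand $h_\e$ with $|\nabla_\e f|$ (the level-set mean curvature) whereas the paper writes the contraction against $\delta_{ij}-\frac{X_i^\e f\,X_j^\e f}{1+|\nabla_\e f|^2}$; either way the vanishing is the same symmetric--antisymmetric pairing.
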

\begin{proof}
We need to show that for $f(x)=\sum_{i=1}^n a_i x_i$,
\begin{equation}
\sum_{i,j=1}^{n} \Bigg(\delta_{ij}-\frac{X_i^{\e }f X_j^{\e }f}{1+|\nabla_{\e} f|^2}\Bigg) X_i^{\e }X_j^{\e } f=0.
\end{equation}
We recall the expression \eqref{bch}
for the vector fields $X_i$, $d(i)=1$ in terms of exponential
coordinates
$X_i=\p_{x_i}+\sum_{d(j)=1, d(h)=2} c_{ij}^h x_j \p_{x_h}.$
 The Campbell-Hausdorff formula
 implies  the anti-symmetry relation $c_{ij}^h =
-c_{ji}^h$. It is immediate to observe that, if $d(k)=2$ one
has
\begin{equation}\label{gradplanes}
X_i (x_k)= \sum_{d(j)=1}c_{ij}^kx_j, \text{ and }X_i X_j
(x_k)=c_{ji}^k, \text{ for }d(i)=d(j)=1,
\end{equation}
if either $d(i)=2$ or $d(j)=2$ it is easy to check $X_i^\e X_j^\e (x_k)=0$
for all $k=1,\ldots ,n$. Since $\Bigg(\delta_{ij}-\frac{X_i^{\e }f X_j^{\e }f}{1+|\nabla_{\e} f|^2}\Bigg)$ is symmetric in $i,j$ it follows that
\begin{equation}
h_\e= \sum_{k=1}^n a_k \sum_{i,j=1}^{n} \Bigg(\delta_{ij}-\frac{X_i^{\e }f X_j^{\e }f}{1+|\nabla_{\e} f|^2}\Bigg) X_i^{\e }X_j^{\e } (x_k)=0.
\end{equation}
\end{proof}

\subsection{Boundary gradient estimates}  
We say that a set $\Om\subset G$ is convex in the Euclidean sense if
$\exp^{-1}(\Om)\subset \lie$ is convex in the Euclidean space $\lie$.
In a group of step two this condition is translation invariant.

\begin{prop}\label{boundary estimates}
Let $G$ be a Carnot group of step two,
 $\Om\subset G$ a bounded, open, convex (in the Euclidean sense) set and $\varphi\in C^2(\bar\Om)$. For $\e>0$ denote by  $u_\e\in C^2(\Om\times(0,T))\cap C^1(\bar\Om\times (0,T))$ the non-negative unique  solution of the initial value problem \eqref{ivp}. There exists $C=C(G, ||\varphi||_{C^2(\bar\Om)})>0$ such that
 \begin{equation}\label{b-1.1}
 \sup_{\p\Om\times(0,T)}|\nabla_\e u_\e| \le  \sup_{\p\Om\times(0,T)}|\nabla_1 u_\e| \le C.
 \end{equation}
 \end{prop}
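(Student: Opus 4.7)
The strategy is to construct affine barriers by means of Lemma~\ref{planes are flat} and invoke the parabolic comparison principle. First I would reduce to a Euclidean-type gradient estimate on $\p\Om$: since $G$ is step two, formula \eqref{bch} yields $X_i=\p_i$ for $d(i)=2$ and $X_i = \p_i +$ (linear combination of $x_j\p_k$, $d(k)=2$) for $d(i)=1$, so that on the bounded set $\bar\Om$ the Riemannian norm $|\nabla_1 u_\e|$ is comparable to the ordinary Euclidean gradient in exponential coordinates. It therefore suffices to bound each Euclidean directional derivative of $u_\e$ at a fixed $(x_0,t)\in\p\Om\times(0,T)$. Because $u_\e=\varphi$ on the lateral parabolic boundary, the tangential components of this gradient are automatically bounded by $\|\varphi\|_{C^1}$, and what remains is to bound the normal derivative $\p_\nu u_\e(x_0,t)$, where $\nu$ denotes the outward Euclidean unit normal to $\p\Om$ at $x_0$.

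For each $x_0\in\p\Om$, Euclidean convexity of $\Om$ supplies a supporting hyperplane $\Pi_{x_0}$; let $L_{x_0}$ be the associated affine function, normalized so that $L_{x_0}(x_0)=0$, $L_{x_0}\ge 0$ on $\bar\Om$ and $|\nabla L_{x_0}|=1$. Set $\ell_{x_0}(x) = \varphi(x_0)+\nabla\varphi(x_0)\cdot(x-x_0)$ and consider
\[
w^\pm(x) \;=\; \ell_{x_0}(x) \;\pm\; K\,L_{x_0}(x).
\]
These functions are affine in the exponential coordinates, so Lemma~\ref{planes are flat} (available precisely because $G$ is of step two) yields $h_\e(w^\pm)=0$ for every $\e\ge 0$, and being time-independent they are stationary solutions of \eqref{pdee}. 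I would verify that, for $K$ sufficiently large in terms of $\|\varphi\|_{C^2}$ and the geometry of $\Om$, one has $w^-\le \varphi\le w^+$ on $\bar\Om$: the Taylor remainder gives $|\varphi(x)-\ell_{x_0}(x)|\le\tfrac12\|\varphi\|_{C^2}|x-x_0|^2$, while Euclidean convexity of $\Om$ at $x_0$ provides a quadratic support bound $L_{x_0}(x)\ge c|x-x_0|^2$ on $\bar\Om\cap B_\delta(x_0)$; for $x$ away from $x_0$, $L_{x_0}$ is bounded below by a positive constant whereas $|\varphi-\ell_{x_0}|$ is bounded, so enlarging $K$ if necessary yields the required ordering. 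With $w^\pm$ dominating, resp.\ dominated by, $u_\e$ on $\p_pQ$, the parabolic comparison principle (classical for $\e>0$ since \eqref{pdee} is uniformly parabolic, and from \cite{CC} in the sub-Riemannian limit) gives $w^-\le u_\e\le w^+$ throughout $Q$. Using the tangency $w^\pm(x_0)=\varphi(x_0)=u_\e(x_0,t)$ and forming the inward normal difference quotient at $x_0$ yields $|\p_\nu u_\e(x_0,t)|\le K+\|\nabla\varphi\|_\infty$, a bound independent of $\e$. Combined with the tangential control, this proves $|\nabla_1 u_\e(x_0,t)|\le C$; the first inequality $|\nabla_\e u_\e|\le|\nabla_1 u_\e|$ is immediate since $X_i^\e=X_i$ for $d(i)=1$ and $X_i^\e=\e X_i$ for $d(i)\ge 2$.

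The principal obstacle is the quadratic support estimate $L_{x_0}(x)\ge c|x-x_0|^2$ on $\bar\Om$ near $x_0$, uniformly in $x_0\in\p\Om$. For uniformly convex $C^2$ domains this follows from a standard Taylor expansion of $\p\Om$ together with a lower bound on its curvature; for the weaker hypotheses of Remark~\ref{rmrk-b} one must localize near the characteristic locus $\Sigma(\p\Om)$ and exploit the sign condition $h_\e^{\p\Om}\le-\delta<0$ outside it to replace $L_{x_0}$ by a barrier constructed directly from $\p\Om$. The step-two restriction is essential, since Lemma~\ref{planes are flat} fails in step $\ge 3$ and affine functions no longer furnish global exact solutions of \eqref{pdee}.
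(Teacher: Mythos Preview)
Your approach is genuinely different from the paper's. You use \emph{affine} barriers $w^\pm=\ell_{x_0}\pm KL_{x_0}$, which are exact stationary solutions of \eqref{pdee} by Lemma~\ref{planes are flat}, and compare them directly to $u_\e$ on the full parabolic boundary. The paper instead first subtracts the boundary datum, setting $v_\e=u_\e-\varphi$, and then compares $v_\e$ with a \emph{nonlinear} barrier $w=\Phi(\Pi)$ where $\Phi(s)=\nu^{-1}\log(1+ks)$ solves $\Phi''+\nu(\Phi')^2=0$; this $w$ is only a supersolution for the inhomogeneous equation satisfied by $v_\e$, and the comparison is carried out \emph{locally} on a set $O\times(0,T)$ near $x_0$.

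The advantage the paper buys with the subtraction is that the lateral boundary comparison becomes trivial: on $\p\Om$ one has $v_\e=0$ and $w=\Phi(\Pi)\ge 0$ automatically, so no geometric control on how $\Pi$ vanishes along $\p\Om$ is needed there. The price is the inhomogeneity $b_\e$, which forces the logarithmic profile $\Phi$.

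Your scheme, by contrast, requires $|\varphi(x)-\ell_{x_0}(x)|\le K\,L_{x_0}(x)$ on \emph{all} of $\bar\Om$, and this is where the gap lies. Neither of your two ingredients holds under mere convexity: the near--field bound $L_{x_0}(x)\ge c|x-x_0|^2$ fails whenever $\p\Om$ is flat (or has vanishing curvature) at $x_0$, and the far--field claim that $L_{x_0}$ is bounded below by a positive constant away from $x_0$ fails for the same reason, since $L_{x_0}$ vanishes identically on any supporting face through $x_0$. On such a face one would need $\varphi\equiv\ell_{x_0}$, which is of course not assumed. You acknowledge the issue for uniformly convex $C^2$ domains, but the Proposition assumes only convexity, and the affine barrier method does not close that gap. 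The paper's device of passing to $v_\e=u_\e-\varphi$ is precisely what circumvents this, since then the only nontrivial boundary comparison is on $\p O\cap\Om$, where strict positivity of $\Pi$ (rather than quadratic growth) suffices and can be arranged by the local choice of $O$.
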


We start by recalling an immediate consequence of the proof of \cite[Theorem 3.3]{CC}.

\begin{lemma}\label{ESth32bis}
For each $\e\ge 0$,
if  $U_\e$ is a bounded  subsolution and $V_\e$ is a bounded
 supersolution of \eqref{ivp} then $U_\e(x,t)\leq V_\e(x,t)$ for all $(x,t)\in Q$.
\end{lemma}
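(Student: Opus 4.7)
The plan is to invoke the Crandall-Ishii-Lions doubling-of-variables comparison argument as carried out in the proof of \cite[Theorem 3.3]{CC}, specialized to the quasilinear operator $\p_t - a_{ij}^\e(\nabla_\e u) X_i^\e X_j^\e$ on the smooth Riemannian manifold $(G, \sigma_\e)$. The coefficient matrix $A^\e(\xi) := (a_{ij}^\e(\xi))$ is symmetric and positive semi-definite for every $\xi$, so the operator is proper in the viscosity-solution sense, which is the only structural input needed for comparison.

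Concretely: (1) Replace $V_\e$ by $\tilde V_\e := V_\e + \gamma\bigl(t + (T - t)^{-1}\bigr)$ with $\gamma > 0$ small, so that $\tilde V_\e$ is a strict supersolution of \eqref{pdee}; it then suffices to prove $U_\e \le \tilde V_\e$ on $Q$ and let $\gamma \to 0$. (2) Assume for contradiction that $M := \sup_Q(U_\e - \tilde V_\e) > 0$; the hypothesis $U_\e \le V_\e$ on $\p_p Q$, together with the blow-up of the penalty as $t \uparrow T$, forces the supremum to be attained at an interior point. (3) Double the variables via
\[
\Phi_\alpha(x, y, t, s) = U_\e(x, t) - \tilde V_\e(y, s) - \frac{1}{2\alpha}\bigl(d_\e(x, y)^2 + (t - s)^2\bigr)
\]
on $\bar Q \times \bar Q$ and select a maximizer $(\hat x_\alpha, \hat y_\alpha, \hat t_\alpha, \hat s_\alpha)$, which for $\alpha$ small is interior and satisfies $d_\e(\hat x_\alpha, \hat y_\alpha) + |\hat t_\alpha - \hat s_\alpha| \to 0$ and $\Phi_\alpha \to M$ as $\alpha \to 0$. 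Apply the parabolic Ishii lemma on $(G, \sigma_\e)$ to produce semijet triples $(\tau^U, p^U, X) \in \bar{\mathcal P}^{2,+} U_\e(\hat x_\alpha, \hat t_\alpha)$ and $(\tau^V, p^V, Y) \in \bar{\mathcal P}^{2,-} \tilde V_\e(\hat y_\alpha, \hat s_\alpha)$ satisfying $\tau^U - \tau^V \ge \gamma$, $p^U - p^V = o_\alpha(1)$, and $X \le Y$ in the matrix sense. (4) Subtract the sub- and supersolution inequalities at these points: positive semi-definiteness together with the Lipschitz regularity of $\xi \mapsto A^\e(\xi)$ shows that the trace difference $\operatorname{tr}\bigl(A^\e(p^U) X\bigr) - \operatorname{tr}\bigl(A^\e(p^V) Y\bigr)$ is bounded by $o_\alpha(1)$, which contradicts $\tau^U - \tau^V \ge \gamma > 0$.

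The main potential obstacle is obtaining a matrix inequality on the doubling penalty $d_\e(x, y)^2$ that is compatible with the horizontal Hessians $(X_i^\e X_j^\e)$, since the vector fields $X_i^\e$ do not commute. This is circumvented by the fact that for every $\e > 0$, $\sigma_\e$ is a smooth complete Riemannian metric on $G$, so the classical Euclidean parabolic Ishii lemma applies after writing the $X_i^\e$ in exponential coordinates; the resulting first-order commutator error terms scale with $d_\e(\hat x_\alpha, \hat y_\alpha)$ and are therefore absorbed in the $o_\alpha(1)$ bound, exactly as in \cite{CC}.
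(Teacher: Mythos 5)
The paper offers no standalone proof of this lemma: it simply states that the result is an immediate consequence of the proof of \cite[Theorem 3.3]{CC}. Your proposal, which unpacks the doubling-of-variables viscosity comparison argument from that reference, is therefore the same approach as the paper's, just made explicit; the structure (strict supersolution perturbation, doubling with penalization, parabolic Ishii lemma, trace estimate via degenerate ellipticity and Lipschitz dependence of $a_{ij}^\e$ on the gradient) is the right one.

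One point to tighten: the lemma covers $\e \ge 0$, but your resolution of the non-commutativity obstacle in the final paragraph explicitly restricts to $\e > 0$ by appealing to the smoothness and completeness of $\sigma_\e$. For $\e = 0$ the penalty $d_0(x,y)^2$ in $\Phi_\alpha$ is not $C^2$ off the diagonal (the Carnot--Carath\'eodory distance is merely locally H\"older/Lipschitz with respect to the underlying smooth structure), so the Ishii lemma cannot be applied to your doubling functional as written. The argument in \cite{CC} avoids this by building the penalization from a fixed smooth gauge (equivalently, the Euclidean distance in exponential coordinates), which is $C^\infty$ independently of $\e$; the horizontal Hessian of that gauge is then controlled uniformly, and this is precisely what makes the comparison argument stable down to $\e = 0$. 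If you replace $d_\e(x,y)^2$ by such a smooth gauge in step (3), the rest of your argument goes through for all $\e \ge 0$ as required.
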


Let $u_\e\in C^2(Q)$ be a solution of \eqref{ivp}, and express the evolution PDE in non-divergence form
\begin{equation}\label{boundary 1.1}
\p_t u_\e =h_\e   = \sum_{i,j=1}^n a_{ij}^\e (\nabla_\e u) X_i^\e X_j^\e u_\e.
\end{equation}
Set $v_\e=u_\e-\varphi$ so that $v_\e$ solves the homogenous 'boundary' value problem
 \begin{equation}\label{ivph}
 \Bigg\{
 \begin{array}{ll}
 \p_t v_\e=  a_{ij}^\e (\nabla_\e v_\e+\nabla_\e \varphi) X_i^\e X_j^\e v_\e   +b^\e &\text{ in }Q=\Om\times(0,T) \\
 v_\e=0 &\text{ on } \p_p Q,
 \end{array}
 \end{equation}
with $b^\e(x)= a_{ij}^\e(\nabla_\e v_\e (x)+\nabla_\e \varphi(x) ) X_i^\e X_j^\e\varphi(x).$
We define our (weakly) parabolic operator for which the function $v_\e$ is  a solution
\begin{equation}\label{boundary 1.2}
Q(v)= a_{ij}^\e (\nabla_\e v_\e+\nabla_\e \varphi) X_i^\e X_j^\e v_\e   +b^\e -\p_t v.
\end{equation}

In the following we construct for each point $p_0=(x_0,t_0)\in \p\Om \times (0,T)$ a {\it barrier function} for $Q, v_\e$:  i.e.,

\begin{lemma}\label{barrier-lemma}
Let  $G$ be a Carnot group of step two and  $\Om\subset G$  convex in the Euclidean sense. For  each point $p_0=(x_0,t_0)\in \p\Om \times (0,T)$ one can construct a positive function $w\in C^2(Q)$ such that
\begin{equation}\label{boundary 1.3} Q(w)\le 0 \text{ in }V\cap Q \text{ with }V\text{
a parabolic neighborhood of } p_0,\end{equation}$$ w(p_0)=0 \text{ and }w\ge v_\e\text{ in }\p_pV\cap Q.$$

\end{lemma}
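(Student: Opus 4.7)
\medskip
\noindent\textbf{Proof plan.}
The idea is to take $w=U-\varphi$, where $U$ is an affine function (in exponential coordinates) designed to be a stationary solution of \eqref{ivp} and to dominate $\varphi$ on $\bar\Omega$.

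Using the Euclidean convexity of $\Omega$ at $x_0\in\partial\Omega$, I would pick a unit vector $\mathbf n\in\R^n$ so that the linear function $\ell(x):=\mathbf n\cdot(x-x_0)$ satisfies $\ell(x_0)=0$ and $\ell\ge 0$ on $\bar\Omega$, and set
$$U(x):=\varphi(x_0)+\nabla\varphi(x_0)\cdot(x-x_0)+K\,\ell(x),\qquad w(x,t):=U(x)-\varphi(x),$$
for a constant $K>0$ to be chosen below. Because $U$ is linear in exponential coordinates, Lemma~\ref{planes are flat} (this is where the step-two assumption enters decisively) yields $\sum_{i,j}a^\e_{ij}(\xi)\,X_i^\e X_j^\e U=0$ for every $\xi\in\R^n$; combining this with $\partial_t U\equiv 0$ and with the definition $b^\e=\sum a^\e_{ij}(\nabla_\e v_\e+\nabla_\e\varphi)X_i^\e X_j^\e\varphi$ and direct substitution into \eqref{boundary 1.2}, one obtains $Q(w)\equiv 0$ in $Q$, and $w(p_0)=0$ is immediate.

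For the remaining inequality $w\ge v_\e$ on $\partial_pV\cap Q$, observe that $v_\e=0$ on all of $\partial_p Q$, so it suffices to prove $U\ge\varphi$ on $\bar\Omega$: Lemma~\ref{ESth32bis} then upgrades this to $U\ge u_\e$ on $Q$ (comparing the stationary supersolution $U$ with the solution $u_\e$), whence $w\ge v_\e$ on $Q$, hence on every $\partial_p V\cap Q$. By a second-order Taylor expansion, $U-\varphi=K\ell-R$ with $|R(x)|\le\tfrac12\|D^2\varphi\|_\infty|x-x_0|^2$. Strict Euclidean convexity together with $C^2$ regularity of $\partial\Omega$ at $x_0$ yields a quadratic lower bound $\ell(x)\ge c_0|x-x_0|^2$ for $x\in\bar\Omega$ near $x_0$, while $\ell$ is bounded below by a positive constant on $\bar\Omega$ away from a fixed neighborhood of $x_0$. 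Picking $K$ sufficiently large, in terms only of $\|\varphi\|_{C^2(\bar\Omega)}$ and the local geometry of $\partial\Omega$ at $x_0$, then settles the claim.

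The main obstacle is producing that quadratic lower bound $\ell(x)\ge c_0|x-x_0|^2$: it is what forces the Euclidean convexity hypothesis (or the weaker mean-curvature-sign assumption of Remark~\ref{rmrk-b}), because without it the barrier $K\ell$ cannot dominate the $O(|x-x_0|^2)$ error coming from $D^2\varphi$. The step-two hypothesis is equally essential: Lemma~\ref{planes are flat} fails in step three or higher, so linear functions cease to be minimal and this stationary-supersolution strategy collapses entirely, consistently with the introduction's remark on the scarcity of explicit minimal graphs in higher step.
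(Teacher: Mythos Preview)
Your approach differs from the paper's: you use an affine barrier $U$ and set $w=U-\varphi$, whereas the paper composes the supporting linear functional $\Pi$ with a logarithmic profile, $w=\nu^{-1}\log(1+k\Pi)$, and exploits the sign of $\Phi''$ to absorb the $b^\e$ term. Your route is conceptually cleaner---a genuine stationary solution avoids the ODE machinery entirely---but there is a real gap.

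To obtain $U\ge\varphi$ on $\bar\Omega$ you invoke ``strict Euclidean convexity together with $C^2$ regularity of $\partial\Omega$'' for the quadratic lower bound $\ell(x)\ge c_0|x-x_0|^2$. Neither hypothesis is assumed: the lemma (and Theorem~\ref{global estimates}) only requires $\Omega$ convex. If $\partial\Omega$ contains a flat face $F$ through $x_0$ then $\ell\equiv 0$ on $F$, while the second--order Taylor remainder $R$ of $\varphi$ generically does not vanish there; no choice of $K$ forces $K\ell\ge R$ on $F$, and your global comparison $U\ge u_\e$ collapses. The paper's barrier dodges this because $\Phi(\Pi)\ge 0$ everywhere on $\bar\Omega$ (hence $w\ge v_\e=0$ on $\partial_p Q$ for free), and the large constant $k$ is needed only on the interior lateral piece $\partial O\cap\Omega$, where $\Pi>0$ strictly since $\Omega$ is open. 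A secondary issue: with $b^\e$ frozen at $v_\e$ exactly as you cite it, substituting $w=U-\varphi$ into \eqref{boundary 1.2} actually yields $Q(w)=\bigl[a_{ij}^\e(\nabla_\e u_\e)-a_{ij}^\e(\nabla_\e U)\bigr]X_i^\e X_j^\e\varphi$, not zero; what is true (and what you really use afterwards) is that $U$ itself solves $h_\e=0$, so the comparison with $u_\e$ via Lemma~\ref{ESth32bis} is fine once $U\ge\varphi$ is secured---but that last step is precisely where the unassumed strict convexity enters.
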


\begin{proof} In the hypothesis that $\Om$ is convex in the Euclidean sense we have that
every $x_0\in \p \Om$ supports a tangent hyperplane $P$ defined by an equation of the form $\Pi(x)=\sum_{i=1}^n a_i x_i=0$ with $\Pi>0$ in $\Om$, $\Pi(x_0)=0$,
and normalized as $\sum_{d(i)=1,2}a_i^2=1$.
Following the standard argument (see for instance \cite[Chapter 10]{Lieberman}) we select the barrier at $(x_0,t_0)\in \p \Om\times(0,T)$ independent of time with
\begin{equation}\label{boundary 1.4}
w= \Phi (\Pi)
\end{equation}
with $\Phi$ solution of \begin{equation}\label{ode}\Phi''+\nu (\Phi')^2=0,
\end{equation}
 in particular
\begin{equation}\label{ode-sol}\Phi(s)= \frac{1}{\nu} \log (1+ks),
\end{equation} with $k$ and $\nu$ chosen appropriately so that conditions
\eqref{boundary 1.3} will hold. We choose a neighborhood $V=O\times (0,T)$ such that $P\cap O\cap\p\Om=\{x_0\}$. By an appropriate choice of $k$ sufficiently large we can easily obtain $w_\e(p_0)=0 \text{ and }w_\e\ge v_\e\text{ in }\p_pV\cap Q.$

To verify $Q(w_\e)\le 0$ we begin by observing that $w$ satisfies
\begin{equation}\label{boundary 1.5}
Q(w)= \Phi' a_{ij}^\e (\nabla_e w+\nabla_\e \varphi) X_i^\e X_j^\e \Pi + \frac{\Phi''}{(\Phi')^2}\F +b_\e,
\end{equation}
with $\F= a_{ij}^\e (\nabla_e w+\nabla_\e \varphi) X_i^\e w X_j^\e w.$

We will show:

\begin{equation}\label{boundary 1.6}
     a_{ij}^\e (\nabla_e w+\nabla_\e \varphi) X_i^\e X_j^\e \Pi \le 0 \tag{Claim 1}
\end{equation}

\begin{equation}\label{boundary 1.7}
  \frac{\Phi''}{(\Phi')^2}\F +b_\e \le 0 \tag{Claim 2}
\end{equation}
in a parabolic neighborhood of $p_0$.

The first claim holds with an equality as $ a_{ij}^\e$ is symmetric and
$X_i^\e X_j^\e \Pi$ is anti-symmetric in view of Lemma \ref{planes are flat}.
To establish \eqref{boundary 1.7} we first note that  Lemma   \ref{planes are flat} implies  $$\frac{\e^2}{2}\le  \max(\sum_{d(i)=1} a_i^2, \e^2 \sum_{d(k)=1} a_k^2) \le |\nabla_\e \Pi |=$$$$=\sum_{d(i)=1} \bigg( a_i + \sum_{d(k)=2, d(j)=1} c^k_{ij} a_k x_j\bigg)^2+ \e^2 \sum_{d(k)=2} a_k^2 \le C(G) (1 + \e^2), $$ for some constant $C(G)>0$.
Consequently, for $\Phi'>>1$ sufficiently large one finds
\begin{equation}\label{boundary 1.8}
\F\ge \frac{|\nabla_\e w|^2}{(1+|\nabla_\e w+\nabla_\e \varphi|^2)^{3/2}} \ge C(G) \frac{|\nabla_\e w|^2}{(1+|\Phi'|^2 +|\nabla_\e \varphi|^2)^{3/2}} \ge C(G) \e^2>0,
\end{equation}
with $C(G)>0$ a constant depending only on $G$ (not always the same along the chain of inequalities). In view of the definition of $b_\e$ and \eqref{ode} with an appropriate choice of $\nu=\nu(G,\e, \phi)>0$ and $k=k(G,\phi)>>1$ in \eqref{ode-sol}, we conclude
\begin{equation}\label{boundary 19}
 \frac{\Phi''}{(\Phi')^2}\F +b_\e \le \bigg(  \frac{\Phi''}{(\Phi')^2} +\nu\bigg) \F=0.
\end{equation}
\end{proof}

In view of Lemma \ref{ESth32bis}, a comparison with the barrier constructed above yields that
\begin{equation}\label{boundary 20}
0\le \frac{v_\e (x,t)}{dist_{\sigma_1}(x,x_0)} \le  \frac{w(x,t)}{{dist_{\sigma_1}(x,x_0)}} \le C(k,\nu),
\end{equation}
in $V\cap Q$, with $dist_{\sigma_1}(x,x_0)$ being the distance between $x$ and $x_0$ in the Riemannian metric  $\sigma_1$,  concluding the proof of the boundary gradient estimates.

\bigskip

The proof of Theorem \ref{global estimates} now follows immediately from Proposition \ref{u_t} and Proposition \ref{boundary estimates}.

\bigskip

Having established uniform global Lipschitz bounds one now notes  that equation \eqref{diff-eq} satisfies {\it horizontal} coercivity conditions uniformly in $\e>0$. Such conditions are among the main hypothesis of the Harnack inequality in \cite{CCR}. In this paper, G. Rea and two of the authors have proved that given a homogenous structure and a Poincar\'e inequality, then  a sub-elliptic analogue of Aronson and Serrin's Harnack  inequality \cite{AR} for quasilinear parabolic equations holds. As a consequence one obtains for some $\al\in (0,1)$ that the solutions $u_\e$ to \eqref{ivp} satisfy $C^{1,\alpha}$ H\"older estimates, uniform in $\e\in (0,1)$,

\begin{dfn}\label{defholder}
Let $0<\alpha < 1$, ${Q}\subset\R^{n+1}$ and  $u$ be defined on
${Q}.$ We say that $u \in C_{\e,X}^{\alpha}({Q})$ if there exists a positive constant $M$ such that for
every $(x,t), (x_{0},t_0)\in {Q}$ 
\begin{equation}\label{e301}
   |u(x,t) - u(x_{0},t_0)| \le M \tilde d_{\e }^\alpha((x,t), (x_{0},t_0)).
\end{equation}
We put  
 $$||u||_{C_{\e,X}^{\alpha}({Q})}=\sup_{(x,t)\neq(x_{0},t_0)} \frac {|u(x,t) - u(x_{0},t_0)|}{\tilde d_{\e }^\alpha((x,t), (x_{0},t_0))}+ \sup_{Q} |u|.$$
 Iterating this definition, 
 if $k\geq 1$  we say that $u \in
C_{\e,X}^{k,\alpha}({Q})$  if for all $i=1,\ldots ,m$
 $X_i \in C_{\e,X}^{k-1,\alpha}({Q})$.
 Where we have set $C^{0,\alpha}_{\e,X}({Q})=C^{\alpha}_{\e, X}({Q}).$ 
\end{dfn}

\begin{cor}\label{propCCR}{(Interior $C_X^{1,\alpha}$ estimates)}  In the hypothesis of the previous results, letting
$K$  be  a compact set  $K\subset\subset Q$, there exist constants $\al\in (0,1)$ and  $C=C(K,\al)>0$ such that for all $i=1,\ldots ,n$ one has that 
 $v= X_i^\e u $ satisfies
$$||v||_{ C_{\e,X}^{\alpha}(K)} + ||\nabla_\e v||_{L^2(K)}\leq C,$$
uniformly in $\e\in (0,1)$.
\end{cor}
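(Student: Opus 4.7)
The plan is to apply the sub-elliptic parabolic Harnack inequality of \cite{CCR} to the divergence form equation \eqref{diff-eq} satisfied by the right-invariant derivatives of $u_\e$, and then transfer the resulting H\"older estimates to the left-invariant derivatives $X_i^\e u_\e$.

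First, fix $h \in \{0,1,\ldots,n\}$ and set $v_h = X_h^r u_\e$ (with $v_0 = \p_t u_\e$). By Lemma \ref{derivatedestre}, $v_h$ is a weak solution in $Q$ of
$$\p_t v_h \;=\; \sum_{i,j=1}^n X_i^\e\bigl(a_{ij}^\e(\nabla_\e u_\e)\, X_j^\e v_h\bigr),$$
where the coefficients $\tilde a_{ij}(x,t) := a_{ij}^\e(\nabla_\e u_\e(x,t))$ are regarded as measurable functions of $(x,t)$. Theorem \ref{global estimates} provides a bound on $|\nabla_\e u_\e|$ uniform in $\e \in (0,1]$, so the matrix $(\tilde a_{ij})$ is uniformly bounded, and a direct computation from $a_{ij}^\e(\xi) = W_\e^{-1}(\delta_{ij} - \xi_i\xi_j/(1+|\xi|^2))$ yields the horizontal coercivity
$$\sum_{i,j=1}^m \tilde a_{ij}(x,t)\,\eta_i\eta_j \;\geq\; (1 + |\nabla_\e u_\e|^2)^{-3/2}\,|\eta|^2 \;\geq\; \lambda\,|\eta|^2$$
for every $\eta \in \R^m$, with a positive $\lambda$ depending only on the uniform Lipschitz bound of Theorem \ref{global estimates}.

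Second, I would invoke the sub-elliptic Aronson-Serrin-type Harnack inequality of \cite{CCR}. Its hypotheses are satisfied uniformly in $\e$: the space $(\tilde G, \tilde d_\e, dx\,dt)$ is homogeneous with constant independent of $\e$ by Proposition \ref{homog-stab}, the Poincar\'e inequality holds with $\e$-stable constant (also proven in \cite{CCR}), and the horizontal coercivity established above is $\e$-independent. The Harnack inequality, combined with the standard oscillation-decay argument, then yields an exponent $\alpha \in (0,1)$ and a constant $C$ depending on $K$, $G$, $\sigma_0$ and on the Lipschitz bound of Theorem \ref{global estimates} such that $||v_h||_{C^\alpha_{\e,X}(K)} \leq C$ for every $h = 0,1,\ldots,n$. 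The $L^2$ gradient bound $||\nabla_\e v_h||_{L^2(K)} \leq C$ is extracted from a standard Caccioppoli estimate: testing the weak form of the divergence equation against $\eta^2 v_h$ with a cutoff $\eta$ equal to $1$ on $K$, and using horizontal coercivity to absorb the leading term.

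Finally, to pass from H\"older control of the full family $\{X_h^r u_\e\}_{h=1}^n$ to control of the left-invariant derivatives $\{X_i^\e u_\e\}_{i=1}^n$, I would use the fact that both families form a basis of the tangent space at every point. The change-of-frame matrix has polynomial entries in the exponential coordinates (as can be read off from \eqref{bch} and its right-invariant analogue), is smooth, and has $X_j^\e$-derivatives bounded on $K$ independently of $\e \in (0,1]$, so it is uniformly Lipschitz in the metric $\tilde d_\e$. Consequently the H\"older estimates for the $X_h^r u_\e$ transfer to the $X_i^\e u_\e$ with $\e$-independent constants; for vertical directions $d(i) > 1$ the extra factor $\e$ from $X_i^\e = \e X_i$ is absorbed harmlessly. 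The principal technical obstacle is the $\e$-uniformity of the Harnack constant, which is delicate precisely because the equation degenerates in the vertical directions as $\e \to 0$; this is why Proposition \ref{homog-stab} and the companion $\e$-stable Poincar\'e inequality of \cite{CCR} are indispensable inputs.
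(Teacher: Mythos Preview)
Your proposal is correct and follows precisely the approach the paper sketches in the paragraph immediately preceding the corollary: apply the $\e$-stable Harnack inequality of \cite{CCR} to the divergence-form equation \eqref{diff-eq} satisfied by the right-invariant derivatives, using the uniform Lipschitz bound of Theorem \ref{global estimates} to secure horizontal coercivity. The paper does not spell out the Caccioppoli step for the $L^2$ gradient bound or the change-of-frame argument from $X_h^r u_\e$ to $X_i^\e u_\e$, but these are exactly the details one must supply, and you have supplied them correctly.
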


\section{Regularity properties in the $C^{k, \alpha}$ spaces}

In this section we will prove uniform estimates for solution of \eqref{pdee}  in the $C^{k,\alpha}_{\e,X}$  H\"older spaces.  This is accomplished by using the uniform Gaussian bounds established in Section \ref{GaussGauss} to develop new  uniform Schauder estimates for solutions of second order sub-elliptic differential equations in non divergence form 
$$L_\e u\equiv \p_t u- \sum_{i,j=1}^n  a^\e_{ij}(x,t) X_i^\e X_j^\e u=0 ,$$
in a cylinder $ Q=\Om\times (0,T)$ that are stable as $\e\to 0$. As usual we will make use of the associated linear,
constant coefficient  {\it  frozen } operator:
$$L_{\e,(x_0,t_0)}\equiv \p_t - \sum_{i,j=1}^n a^\e_{ij}(x_0,t_0) X_i^\e X_j^\e,$$
where $(x_0,t_0)\in Q$.

\smallskip


%
As a direct consequence of the definition of fundamental solution one has the following representation formula
\begin{lemma}\label{representation} Let $w$ be a smooth solution to
$L_\e w=f $ in $Q\subset\R^{n+1}$.
For every  $\phi \in C^\infty_0(Q)$,
\begin{align}\label{repres}
   (w\phi)(x,t)
& =   \int_Q\G^\e_{(x_0,t_0)}((x,t),(y,\tau))
\big(L_{\e, (x_0,t_0)}-L_\e\big)(w \,\phi)(y,\tau) dyd\tau+\\ \nonumber 
&+\int_{Q}\G^\e_{(x_0,t_0)}((x,t),(y,\tau))
\big( f\phi  +  wL_\e\phi + 2 \sum_{i,j=1}^n a_{ij}^\e (y,\tau) X_i^\e wX_j^\e \phi\big)(y,\tau) dyd\tau,
\end{align}
where  we have denoted by $\G^\e_{(x_0,t_0)}$ the heat kernel for  of $L_{\e,(x_0,t_0)}$.
\end{lemma}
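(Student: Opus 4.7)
The identity is a purely formal consequence of the defining property of the fundamental solution applied to the compactly supported test function $w\phi$, combined with Leibniz's rule. The plan is first to represent $w\phi$ via $\Gamma^\e_{(x_0,t_0)}$, and then to expand the right-hand side by adding and subtracting $L_\e(w\phi)$.

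\smallskip

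Since $\phi\in C^\infty_0(Q)$, the product $u:=w\phi$ extends by zero to a compactly supported smooth function on $\R^{n+1}$. Because $L_{\e,(x_0,t_0)}$ has constant coefficients and the left-invariant fields $X_i^\e$ are skew-adjoint in $L^2(G,dx)$ (recall the Haar measure of $G$ coincides with Lebesgue measure in exponential coordinates, and this is used in the Carnot-group case), the formal adjoint is $L^*_{\e,(x_0,t_0)}=-\p_\tau - \sum a_{ij}(x_0,t_0) X_i^\e X_j^\e$. The Gaussian bounds in Proposition \ref{uniform heat kernel estimates} make the kernel locally integrable with enough decay in $y$ for all integrations by parts to be justified, and the defining property $L^*_{\e,(x_0,t_0),\,(y,\tau)}\G^\e_{(x_0,t_0)}((x,t),\cdot)=\delta_{(x,t)}$ immediately yields the reproducing identity
\begin{equation}\label{reproducing}
   (w\phi)(x,t)=\int_Q \G^\e_{(x_0,t_0)}((x,t),(y,\tau))\,L_{\e,(x_0,t_0)}(w\phi)(y,\tau)\,dyd\tau.
\end{equation}

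\smallskip

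The next step is to decompose
$$L_{\e,(x_0,t_0)}(w\phi)=\bigl(L_{\e,(x_0,t_0)}-L_\e\bigr)(w\phi)+L_\e(w\phi),$$
and compute $L_\e(w\phi)$ via the Leibniz rule. Writing out
$$X_i^\e X_j^\e(w\phi)=\phi\,X_i^\e X_j^\e w+w\,X_i^\e X_j^\e\phi+X_i^\e w\,X_j^\e\phi+X_j^\e w\,X_i^\e\phi,$$
contracting with the symmetric matrix $a_{ij}^\e(y,\tau)$, and combining with $\p_t(w\phi)=\phi\p_t w+w\p_t\phi$, one obtains
$$L_\e(w\phi)=\phi\,L_\e w+w\,L_\e\phi-2\sum_{i,j=1}^n a_{ij}^\e\,X_i^\e w\,X_j^\e\phi.$$
Using $L_\e w=f$ and substituting this expression into \eqref{reproducing} produces exactly the stated formula (up to the overall sign convention on the $2a_{ij}^\e X_i^\e w\,X_j^\e \phi$ cross term, which is fixed by the Leibniz computation).

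\smallskip

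There is no genuine analytic obstacle here; the only nontrivial ingredient is the existence and regularity of $\G^\e_{(x_0,t_0)}$, which is already provided by Proposition \ref{uniform heat kernel estimates}. The real purpose of writing \eqref{reproducing} in this ``frozen-coefficient plus error'' form is that the first integral on the right-hand side contains the small factor $a_{ij}^\e(y,\tau)-a_{ij}^\e(x_0,t_0)$, which combined with the uniform Gaussian bounds of the previous section will drive the Schauder iteration in the sequel. Accordingly, the main work is not in proving the lemma itself but in using it; the lemma should be stated and verified as a one-line consequence of the defining property of $\G^\e_{(x_0,t_0)}$ and of the product rule.
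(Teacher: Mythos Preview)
Your argument is correct and is exactly what the paper has in mind: the authors do not write out a proof at all, merely stating that the identity is ``a direct consequence of the definition of fundamental solution,'' which is precisely the reproducing-plus-Leibniz computation you carry out. Your remark about the sign of the cross term is also apt; with the convention $L_\e=\p_t-\sum a_{ij}^\e X_i^\e X_j^\e$ and symmetric $a_{ij}^\e$, the Leibniz rule gives $L_\e(w\phi)=\phi L_\e w+wL_\e\phi-2\sum a_{ij}^\e X_i^\e w\,X_j^\e\phi$, so the $+2$ in the displayed formula is a typographical slip that does not affect the subsequent Schauder estimate.
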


We explicitly note that for $\epsilon>0$ fixed the operator $L_{\e, (x_0,t_0)}$ is uniformly parabolic.
iIs heat kernel can be studied through standard singular integrals theory in the corresponding Riemannian balls. Hence, as noted  in \cite{Friedman},
one is allowed to  differentiate twice the kernels defined in (\ref{repres}) with respect to any right or left  invariant vector field. 

\begin{prop}\label{elle2} 
Let  $0<\alpha<1$  and  $w$ be a smooth  solution of $L_{\e}w=f\in C^{\alpha}_{\e.X}({Q})$ in the cylinder   ${Q}$.
Let $K$ be a compact sets such that  $K\subset\subset {Q}$,  set $2\delta=d_0(K, \p_p Q)$ and
denote by $K_\delta$ the $\delta-$tubular neighborhood of $K$.
 Assume that 
there exists a constant $C>0$ such that for every $\e\in (0,1)$
$$ || a^\e_{ij}||_{C^{ \alpha}_{\e,X}(K_\delta)}\leq C. $$
There exists a constant $C_1>0$ depending  on $\delta$,
$\alpha$, $C$ and the constants in Proposition \ref{uniform heat kernel estimates}
such that
$$||w||_{C^{2, \alpha}_{\e,X}(K)} \leq  C_1\left( ||f||_{C^{\alpha}_{\e,X}(K_\delta)}+ ||w||_{C^{1, \alpha}_{\e,X}(K_\delta)}\right).$$
\end{prop}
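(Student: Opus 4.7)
\medskip

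\noindent\textbf{Proof proposal.}
The plan is to adapt the classical Schauder argument via freezing of coefficients and the fundamental solution representation of Lemma \ref{representation}, in the spirit of \cite{CiMa-F}, while keeping track of the fact that all constants in the Gaussian bounds of Proposition \ref{uniform heat kernel estimates} and in the doubling property of Proposition \ref{homog-stab} are independent of $\e\in(0,1)$. First I would use a finite partition of unity to reduce to the following local statement: for any $(x_0,t_0)\in K$ and any sufficiently small parabolic cylinder $V_r=B_\e(x_0,r)\times(t_0-r^2,t_0)\subset K_\delta$, choose a cutoff $\phi\in C_0^\infty(V_{2r})$ with $\phi\equiv 1$ on $V_r$, and apply the representation formula \eqref{repres} centered at $(x_0,t_0)$. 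Differentiating the formula twice under the integral sign with respect to the horizontal vector fields $X_i^\e X_j^\e$ yields a splitting
\begin{equation*}
 X_i^\e X_j^\e(w\phi)(x,t)=I_1(x,t)+I_2(x,t),
\end{equation*}
where $I_1$ collects the contribution of the freezing error $(L_{\e,(x_0,t_0)}-L_\e)(w\phi)$, and $I_2$ gathers the terms involving $f\phi$, $wL_\e\phi$, and the cross term $a_{ij}^\e X_i^\e w\,X_j^\e\phi$.

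For $I_1$, the hypothesis $\|a_{kl}^\e\|_{C^\al_{\e,X}(K_\delta)}\le C$ gives
$|a_{kl}^\e(x_0,t_0)-a_{kl}^\e(y,\tau)|\le C\,\tilde d_\e((x_0,t_0),(y,\tau))^\al$,
while Proposition \ref{uniform heat kernel estimates} provides
$|X_i^\e X_j^\e\G^\e_{(x_0,t_0)}((x,t),(y,\tau))|\lesssim (t-\tau)^{-1}|B_\e(0,\sqrt{t-\tau})|^{-1}$
times a Gaussian factor. Their product is a weakly singular kernel of homogeneous order $\al$ in the parabolic space $(\tilde G,\tilde d_\e,dx\,dt)$, which is doubling with $\e$-independent constant. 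A standard Calder\'on--Zygmund-type argument in homogeneous spaces then gives
\begin{equation*}
\|I_1\|_{L^\infty(V_r)}\le\eta(r)\,\|X_i^\e X_j^\e(w\phi)\|_{L^\infty(V_{2r})},\qquad \eta(r)\to 0\text{ as }r\to 0,
\end{equation*}
so $I_1$ can be absorbed on the left after choosing $r$ small. The term $I_2$ involves only $\G^\e_{(x_0,t_0)}$ and its first horizontal derivatives, whose Gaussian bounds, together with standard convolution estimates in homogeneous spaces, yield
\begin{equation*}
\|I_2\|_{L^\infty(V_r)}\le C\bigl(\|f\|_{C^\al_{\e,X}(K_\delta)}+\|w\|_{C^{1,\al}_{\e,X}(K_\delta)}\bigr).
\end{equation*}

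To upgrade from the pointwise bound on $X_i^\e X_j^\e w$ to its $C^\al_{\e,X}$ seminorm, I would subtract the representation at two parabolic-neighboring points $(x,t),(x',t')$ with $\tilde d_\e((x,t),(x',t'))\le r$ and invoke: (i) a mean value argument along a piecewise $\e$-horizontal path to trade one power of $\tilde d_\e$ against an extra horizontal derivative of the kernel, using the bound on $X^\e_{i_1}\cdots X^\e_{i_3}\G^\e$ of Proposition \ref{uniform heat kernel estimates}; (ii) the Lipschitz dependence of $\G^\e$ on the frozen matrix $A$, also provided by Proposition \ref{uniform heat kernel estimates}, to handle the change of the freezing point from $(x_0,t_0)$ to $(x_0',t_0')$ without losing in $\e$. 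Summing the local estimates via the partition of unity produces the desired global bound on $K$.

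The principal obstacle is ensuring that the smallness rate $\eta(r)$ in the absorbing step does not deteriorate as $\e\to 0$: this is precisely what Proposition \ref{uniform heat kernel estimates} (with constants depending only on $\Lambda$), Proposition \ref{homog-stab}, and the ball-box equivalence \eqref{ball-box} are designed to provide. Once this uniformity is in hand the remainder of the argument parallels the Euclidean parabolic Schauder theory, with Riemannian balls replaced by the $\e$-dependent balls $B_\e$, and the final constant $C_1$ depends only on $\delta$, $\al$, $C$, and on the $\e$-independent constants $C_\Lambda, C_{s,k}$.
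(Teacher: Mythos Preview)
Your proposal is correct and follows essentially the same route as the paper: both arguments localize with a cutoff, apply the representation formula of Lemma~\ref{representation} frozen at the evaluation point, split into the freezing-error term and the data term, use the $\e$-uniform Gaussian bounds of Proposition~\ref{uniform heat kernel estimates} together with the Lipschitz dependence of $\Gamma_{\e,A}$ on $A$ (Lemma~\ref{convergencelem} in the paper, which you correctly identify as part of Proposition~\ref{uniform heat kernel estimates}), and absorb the small-radius term on the left before covering $K$. The only cosmetic difference is that the paper estimates the $C^\alpha_{\e,X}$ norm of the freezing term in one stroke via the combined kernel-difference bound
\[
\big|X^\e_{i_1}X^\e_{i_2}\Gamma^\e_{(x,t)}((x,t),(y,\tau))-X^\e_{i_1}X^\e_{i_2}\Gamma^\e_{(x_0,t_0)}((x_0,t_0),(y,\tau))\big|\le C\,\tilde d_\e^\alpha((x,t),(x_0,t_0))\,\frac{(\tau-t_0)^{-1}e^{-d_\e(x_0,y)^2/C_\Lambda(\tau-t_0)}}{|B_\e(0,\sqrt{\tau-t_0})|},
\]
obtaining directly $\|I_1\|_{C^\alpha_{\e,X}}\le \tilde C_1\,\delta^\alpha\|a^\e_{ij}\|_{C^\alpha_{\e,X}}\|w\phi\|_{C^{2,\alpha}_{\e,X}}$, whereas you separate the $L^\infty$ bound from the H\"older upgrade; but the ingredients and the absorption mechanism are identical.
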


\begin{proof} The proof follows the outline of the standard case, as in  \cite[Theorem 4, Chapter 3]{Friedman}, and rests crucially on the Gaussian estimates proved in  Proposition \ref{uniform heat kernel estimates},  and  the fact that the functions $\left(L_{\e, (x_0,t_0)}-L_\e\right)(w \,\phi)$,  and $\left( f\phi  +  wL_\e\phi + 2 a_{ij}^\e X_i^\e wX_j^\e \phi\right)$  are  H\"older continuous. 

Choose a parabolic sphere\footnote{That is a sphere in the group $\tilde G=G\times \R$ in the pseudo-metric $\tilde{d}_\e$ defined in \eqref{defde}.} $B_{\e,\delta} \subset\subset K$ where  $\delta>0$ will be fixed later and 
a cut-off function $\phi\in C^\infty_0(\R^{n+1})$ identically 1 on $B_{\e, \delta/2}$ and compactly supported in $B_{\e,\delta}$. This clearly implies that for some constant $C>0$ depending only on $G$ and $\s_0$, 
$\left|\nabla_\e \phi\right|\leq C\delta^{-1}, \quad |L^\e \phi| \leq C\delta^{-2},$ in $Q$.
Next, invoking \eqref{representation} one has that for every multi-index $I=(i_1,i_2)\in \{1,\ldots ,m\}^2$  and   for every $(x_0,t_0)\in B_{\e,\delta}$
 \begin{align}\label{represDeriv}
X_{i_1}^\e X_{i_2}^\e  (w\phi)(x_0,t_0)  = &  \int_Q X_{i_1}^\e X_{i_2}^\e \G^\e_{(x_0,t_0)}(\cdot,(y,\tau))|_{(x_0,t_0)}
\left(L_{\e,(x_0,t_0)}-L_\e\right)(w \,\phi)(y,\tau) dyd\tau+\\ \nonumber 
    & + \int_Q X_{i_1}^\e X_{i_2}^\e \G^\e_{(x_0,t_0)}(\cdot,(y,\tau))
\left( f\phi  +  wL_\e \phi + 2\sum_{i,j=1}^n  a_{ij}^\e X_i^\e wX_j^\e \phi\right)(y,\tau) dyd\tau.
\end{align} 
The uniform  H\"older continuity of $a_{ij}^\e$, with Proposition \ref{uniform heat kernel estimates} and Lemma \ref{convergencelem} yield
\begin{align*}|X_{i_1}^\e X_{i_2}^\e \G^\e_{(x,t)}((x,t),(y,\tau))&- X_{i_1}^\e X_{i_2}^\e \G^\e_{(x_0,t_0)}((x_0,t_0),(y,\tau))|\\
&\le C\,\tilde d^\alpha _\e((x,t),(x_0,t_0)) 
\frac{(\tau-t_0)^{-1}e^{-\frac{d_\e(x_0,y)^2}{C_\Lambda (\tau-t_0)}}} {|B_\e(0, \sqrt{\tau-t_0})|},
\end{align*}
with $C>0$ independent of $\e$.
In view of the latter, using basic singular integral properties (see \cite{fol:1975}) and proceeding as in \cite[Theorem 2, Chapter 4]{Friedman}, 
we obtain
\begin{align}\label{estimate1}
\Big|\Big|
\int X_{i_1}^\e X_{i_2}^\e\G^\e_{(x_0,t_0)} &(\cdot,(y,\tau)) (L_\e-L_{\e,(x_0, t_0)})
(w \,\phi )(y,\tau)dyd\tau \Big|\Big|_{C^\alpha_{\e,X}(B_{\e,\delta})} \\ \nonumber
&\le C_1 \Big|\Big|(L_\e-L_{\e,(x_0, t_0)})
(w \,\phi )\Big|\Big|_{C^\alpha_{\e,X}(B_{\e, \delta})} \\ \nonumber
&= C_1\sum_{i,j}\left|\left|(a_{ij}^\e(x_0,t_0)-a_{ij}^\e(\cdot)\big)X_j^\e X_j^\e(w \,\phi)
   \right|\right|_{C^\alpha_{\e,X}(B_{\e, \delta})}\\ \nonumber
&\le \tilde C_1 \delta^\alpha  ||a_{ij}^\e||_{C^{\alpha}_{\e,X}(B_{\e,\delta})}||w\phi||_{C^{2, \alpha}_{\e,X}(B_{\e,\delta})},
\end{align}
where $C_1,$ and $\tilde C_1$ are  stable  as $\e\to 0$.
Similarly, if $\phi$ is fixed, the H\"older norm of the second term in the representation formula (\ref{represDeriv}) is bounded by
\begin{align}\label{estimate2}
\Big|\Big|\int X_{i_1}^\e X_{i_2}^\e\G^\e_{(x_{0},t_0)}((x_0,t_0),(y,\tau))
&\big(f\phi(y,\tau) + wL\phi(y,\tau) + 2 a_{ij}^\e X_i^\e wX_j^\e \phi \big)dyd\tau\Big|\Big|_{C^\alpha_{\e,X}(B_{\e, \delta})}\\  \nonumber
& \leq C_2 \left(||f||_{C^\alpha_{\e,X}(K_\delta)} +\frac{C}{\delta^2}|| w||_{C^{1, \alpha}_{\e,X}(K_\delta)}\right).
\end{align}
From  (\ref{represDeriv}), (\ref{estimate1}) and (\ref{estimate2})  we deduce that 
$$||w\phi||_{C^{2, \alpha}_{\e,X}(B_\delta)}\leq  \tilde C_2\, \delta^\alpha ||w\phi||_{C^{2, \alpha}_{\e,X}(B_\delta)} + C_2\left(||f||_{C^\alpha_{\e,X}(K_\delta)} +\frac{C}{\delta^2}|| w||_{C^{1,\alpha}_{\e,X}(K_\delta)}\right).$$
Choosing $\delta$ sufficiently small we prove  the assertion  on the fixed sphere $B_{\e,\delta}$
The conclusion follows from  a standard  covering argument.

\end{proof}

%
%


A direct calculation shows the following
\begin{lemma}\label{kderiv} Let $k\in N$ and consider a  $k-$tuple $(i_1,\ldots,i_k)\in \{1,\ldots,m\}^k$.
There exists a differential operator  $B$ of order $k+1$,   depending on horizontal derivatives of $a_{ij}^\e$ of order at most $k$, such that  
$$ X_{i_1}^\e\cdots X_{i_k}^\e  \big(L_{\e,(x_0,t_0)}-L_\e\big)= \sum_{i,j=1}^n \Big(
a_{ij}^\e - a_{ij}^\e(x_0, t_0) \Big)  X_{i_1}^\e\cdots X_{i_k}^\e X^\e_iX^\e_j  + B. $$
\end{lemma}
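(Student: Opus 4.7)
The plan is to reduce the identity to a single application of the iterated Leibniz rule for the first-order derivations $X_1^\e,\ldots,X_n^\e$. Writing
$$(L_{\e,(x_0,t_0)}-L_\e)u=\sum_{i,j=1}^n\bigl(a_{ij}^\e-a_{ij}^\e(x_0,t_0)\bigr)X_i^\e X_j^\e u,$$
the task reduces to computing $X_{i_1}^\e\cdots X_{i_k}^\e(fg)$ for $f=a_{ij}^\e-a_{ij}^\e(x_0,t_0)$ and $g=X_i^\e X_j^\e u$, and then summing over $i,j$ and collecting terms.

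First I would establish by induction on $k$ the general expansion
$$X_{i_1}^\e\cdots X_{i_k}^\e(fg)=f\cdot X_{i_1}^\e\cdots X_{i_k}^\e g+\sum_{\substack{S\subseteq\{1,\ldots,k\}\\ 1\leq|S|\leq k}}(D_S f)(D_{S^c}g),$$
where each $D_S$ is a specific ordered composition of the $|S|$ vector fields $X_{i_l}^\e$, $l\in S$, and similarly for $D_{S^c}$. The base case $k=1$ is simply $X(fg)=(Xf)g+f(Xg)$, and the inductive step follows by applying one more derivation $X_{i_{k+1}}^\e$ and invoking the Leibniz rule once more on each summand. The precise orderings dictated by the expansion play no role in the conclusion, since only the total orders of the resulting derivatives matter.

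Next I would identify each piece. The isolated main term $f\cdot X_{i_1}^\e\cdots X_{i_k}^\e g=\bigl(a_{ij}^\e-a_{ij}^\e(x_0,t_0)\bigr)X_{i_1}^\e\cdots X_{i_k}^\e X_i^\e X_j^\e u$, summed over $i,j$, reproduces exactly the first summand on the right-hand side of the lemma. Every remaining term has $l:=|S|\geq 1$ derivatives landing on $f$ and $k-l$ derivatives landing on $X_i^\e X_j^\e u$, giving a differential operator in $u$ of total order $(k-l)+2\leq k+1$ with coefficients involving horizontal derivatives of $a_{ij}^\e$ of order exactly $l\leq k$. Defining $B$ as the sum of all these remainder contributions over $S$ and $i,j$ produces the desired operator. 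Since the outer indices $i_1,\ldots,i_k$ are restricted to $\{1,\ldots,m\}$, all derivatives of $a_{ij}^\e$ appearing in the coefficients of $B$ are genuinely horizontal, as claimed. The argument is purely algebraic, with no analytic or heat-kernel input required, so there is no serious obstacle: the only bookkeeping issue is keeping track of which ordered compositions $D_S$ appear, and this affects neither the order of $B$ as a differential operator in $u$ nor the order of derivatives of $a_{ij}^\e$ carried by its coefficients.
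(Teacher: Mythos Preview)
Your argument is correct and is precisely the ``direct calculation'' the paper alludes to without spelling out: the paper states the lemma with the preamble ``A direct calculation shows the following'' and gives no further proof. Your iterated Leibniz expansion, together with the observation that every remainder term carries at least one horizontal derivative on the coefficient and hence has order at most $k+1$ in $u$, is exactly that calculation.
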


\begin{prop}\label{ellek}
Let  $w$ be a smooth  solution of $L^{\e}w=f$ on ${Q}$.
Let $K$ be a compact sets such that  $K\subset\subset {Q}$,  set $2\delta=d_0(K, \p_p Q)$ and
denote by $K_\delta$ the $\delta-$tubular neighborhood of $K$.Assume that 
there exists a constant $C>0$ such that 
$$ || a_{ij}^\e||_{C^{k,\alpha}_{\e,X}(K_\delta)} \leq C,$$ for any $\e\in (0,1)$.
There exists a constant $C_1>0$ depending  on
$\alpha$, $C$, $\delta$, and the constants in Proposition \ref{uniform heat kernel estimates},
but independent of $\e$,  such that
$$||w||_{C^{k+2, \alpha}_{\e,X}(K)} \leq C_1 \left( ||f||_{C^{k,\alpha}_{\e,X}(K_\delta)}+ ||w||_{C^{k+1, \alpha}_{\e,X}(K_\delta)}\right). $$
\end{prop}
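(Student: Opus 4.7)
The plan is to prove Proposition \ref{ellek} by induction on $k$. The base case $k=0$ is precisely Proposition \ref{elle2}, already established. Assuming the estimate holds for $k-1$, I would deduce the $k$-th case by repeating the scheme of the proof of Proposition \ref{elle2} with $k$ additional horizontal derivatives pushed through the representation formula of Lemma \ref{representation}, and then reorganizing the resulting commutator terms via Lemma \ref{kderiv}. Fix $(x_0,t_0)\in K$, a parabolic ball $B_{\e,\delta}\subset\subset K_\delta$ centered there, and a cut-off $\phi\in C^\infty_0$ equal to $1$ on $B_{\e,\delta/2}$ and supported in $B_{\e,\delta}$. Starting from the representation formula for $w\phi$, I apply the operator $X^\e_{i_1}\cdots X^\e_{i_k}X^\e_{j_1}X^\e_{j_2}$ to both sides and evaluate at a point in $B_{\e,\delta/2}$, placing the two $X^\e_{j_\ell}$-derivatives on the frozen kernel $\Gamma^\e_{(x_0,t_0)}$ exactly as in the proof of Proposition \ref{elle2}, and letting the remaining $k$ horizontal derivatives fall on the integrand.

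On the first summand $(L_{\e,(x_0,t_0)}-L_\e)(w\phi)$, Lemma \ref{kderiv} yields
\[
X^\e_{i_1}\cdots X^\e_{i_k}(L_{\e,(x_0,t_0)}-L_\e)(w\phi) = \sum_{i,j}\bigl(a^\e_{ij}-a^\e_{ij}(x_0,t_0)\bigr)X^\e_{i_1}\cdots X^\e_{i_k}X^\e_iX^\e_j(w\phi)+B(w\phi),
\]
where $B$ is of order $k+1$ with coefficients controlled by the horizontal derivatives of $a^\e_{ij}$ up to order $k$. The principal term, paired with $X^\e_{j_1}X^\e_{j_2}\Gamma^\e_{(x_0,t_0)}$ and measured in $C^\alpha_{\e,X}$, is handled by the same $\e$-uniform singular integral argument used for \eqref{estimate1}, now applied to $\|w\phi\|_{C^{k+2,\alpha}_{\e,X}}$ and exploiting the $C^\alpha_{\e,X}$-smallness of $a^\e_{ij}-a^\e_{ij}(x_0,t_0)$ on $B_{\e,\delta}$. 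This yields the absorbable bound
\[
\tilde C\,\delta^\alpha\,\|a^\e_{ij}\|_{C^\alpha_{\e,X}}\,\|w\phi\|_{C^{k+2,\alpha}_{\e,X}(B_{\e,\delta})},
\]
with $\tilde C$ stable as $\e\to 0$ thanks to Proposition \ref{uniform heat kernel estimates} and Lemma \ref{convergencelem}. The remainder $B(w\phi)$, together with the lower-order pieces $f\phi$, $wL_\e\phi$, $2a^\e_{ij}X^\e_i wX^\e_j\phi$ of the representation formula, contain (after the full $(k+2)$-fold horizontal differentiation and Leibniz expansion) at most $k+1$ horizontal derivatives of $w$ and $k$ horizontal derivatives of $f$, with coefficients bounded by $\|a^\e_{ij}\|_{C^{k,\alpha}_{\e,X}}$. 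Using again the uniform Gaussian bounds on $X^\e_{j_1}X^\e_{j_2}\Gamma^\e_{(x_0,t_0)}$, their $C^\alpha_{\e,X}$-norms are controlled by $C_1\bigl(\|f\|_{C^{k,\alpha}_{\e,X}(K_\delta)}+\|w\|_{C^{k+1,\alpha}_{\e,X}(K_\delta)}\bigr)$.

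Combining these estimates gives, on a fixed sphere $B_{\e,\delta}$,
\[
\|w\phi\|_{C^{k+2,\alpha}_{\e,X}(B_{\e,\delta})}\le \tilde C\,\delta^\alpha\,\|w\phi\|_{C^{k+2,\alpha}_{\e,X}(B_{\e,\delta})}+C_1\bigl(\|f\|_{C^{k,\alpha}_{\e,X}(K_\delta)}+\|w\|_{C^{k+1,\alpha}_{\e,X}(K_\delta)}\bigr),
\]
and the first term is absorbed by choosing $\delta$ small, uniformly in $\e$; a finite covering of $K$ concludes the induction. The principal obstacle is controlling the auxiliary operator $B$ produced by Lemma \ref{kderiv}, since it mixes derivatives of the coefficients with $(k+1)$-st order derivatives of $w\phi$: one must verify that, after Leibniz expansion and the singular-integral bounds, every constant is stable as $\e\to 0$. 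This stability is supplied precisely by the uniform Gaussian estimates of Proposition \ref{uniform heat kernel estimates} and the convergence lemmas, which replace the standard Euclidean Calder\'on--Zygmund machinery used in the classical Schauder induction.
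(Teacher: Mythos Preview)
Your proposal is correct and follows essentially the same route as the paper: differentiate the representation formula of Lemma \ref{representation} $k$ times in horizontal directions, invoke Lemma \ref{kderiv} to split the integrand into the absorbable principal part $(a^\e_{ij}-a^\e_{ij}(x_0,t_0))X^\e_{i_1}\cdots X^\e_{i_k}X^\e_iX^\e_j(w\phi)$ and the $(k{+}1)$-st order remainder $B(w\phi)$, then apply two further horizontal derivatives to the kernel and repeat the singular-integral/absorption argument of Proposition \ref{elle2}. The only difference is cosmetic: you frame the argument as an induction on $k$, but note that your inductive hypothesis is never actually invoked, since the stated estimate already carries $\|w\|_{C^{k+1,\alpha}_{\e,X}}$ on the right-hand side; the paper accordingly proves the bound directly for arbitrary $k$ without induction.
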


\begin{proof}
The proof is similar to the  previous one for $k=1$. We note that  differentiating 
  the  representation formula (\ref {repres}) along any
  $k-$tuple $(i_1,\ldots ,i_k)\in \{1,\ldots,m\}^{k}$,
 and using Lemma \ref{kderiv}, yields
\begin{align*}
 X_{i_1}^\e\cdots X_{i_{k}}^\e    (w\phi)&(x,t)
 =   \int_Q\G^\e_{(x_{0},t_0)}((x,t),(y,\tau))
\Big(
a_{ij}^\e - a_{ij}^\e(x_0, t_0) \Big)X_{i_1}^\e\cdots X_{i_{k}}^\e    X^\e_iX^\e_j (w \,\phi)(y,\tau) dyd\tau\\
&+  \int_Q\G^\e_{(x_{0},t_0)}((x,t),(y,\tau))B (w \,\phi)(y,\tau) dyd\tau \\
&+\int_Q\G^\e_{(x_{0},t_0)}((x,t),(y,\tau))
 X_{i_1}^\e\cdots X_{i_{k}}^\e  \Big(f\phi(y,\tau) + wL_\e\phi(y,\tau) + 2 \sum_{i,j=1}^n a_{ij}^\e X_i^\e wX_j^\e\phi \Big)dyd\tau,
\end{align*}
where $\phi$ is as in the proof of Proposition \ref{elle2} and $B$ is a differential operator of order $k+1$.
The conclusion follows by further differentiating the previous representation formula along two horizontal directions $X_{j_1}^\e X_{j_2}^\e$ and arguing  as in the proof of  Proposition \ref{elle2}. 
\end{proof}

\begin{proof}
[Proof of Theorem \ref{global in time  existence results}]
We will prove by induction that for every $k\in N$ and for every compact set $K\subset\subset {Q}$ there exists a positive constant $C$ such that
\begin{equation}\label{stimak}
||u_\e||_{C^{k,\alpha}_{\e,X}(K)}\leq C,
\end{equation}
for every $\e>0$.
The assertion is true if $k=2$,  by Corollary \ref{propCCR}  and Proposition  \ref{elle2}. If (\ref{stimak}) is true for $k+1$, then the coefficients  in \eqref{pdee} satisfy $a_{ij}^\e\in C^{k, \alpha}_{\e,X}$ uniformly as $\e\in (0,1)$ and 
 (\ref{stimak}) thus holds at the level $k+2$ by virtue of Proposition  \ref{ellek}.
\end {proof}

\section{Appendix }
Let us assume that $G$ be a free nilpotent Lie group, and let $\lie=V^1\oplus...\oplus V^r$ its associated Lie algebra. 
Denote by  $X_1, \ldots, X_m$ a basis of the first layer of the Lie algebra. 
and by $X_{m+1}, \ldots, X_n$  a list of vectors which complete a basis of the tangent space. 
If $u(x)$ is an homogeneous polynomial of order $p$ and  $X$ is a differential 
operator of degree $q$ we will call $u(x) X$ differential operator of degree $q-p$. 
If $a_{ij}=(A)_{ij}$ is a real valued, constant coefficient, $m\times m$ positive definite matrix , one can define 
a Lie algebra automorphism  
$$T_A: V^1 \rightarrow V^1, 
\text{ with } T_A(X_i) = \sum_{j=1}^m a_{ij} X_j$$
for every $i=1, \ldots, m$ and extend it   to the whole Lie algebra as a morphism 
$$ T_A[X_i, X_j] = [T_A(X_i),T_A(X_j)] = \sum_{h, k=1}^m a_{ih}a_{jk}[X_h, X_k].$$
Note that 
$T_A$ can be represented as a block matrix  $\bar A = A_1 \oplus A_2 \cdots \oplus A_r$, with $A_1=A$ and 
where the block $A_j$ act on vectors of degree $j$, and its coefficients are polynomial of order $j$ of the coefficients of $A_1$. 
In particular $T_A(X)$ and $X$  have the same degree.
Via the exponential map $T_A$ induces a group automorphism on the whole group
$$F_A: G \rightarrow G, \text{ defined by } F_A(x) = \exp(T_A(\log(x)).$$
In terms of  exponential coordinates  ({\it around} the origin)
\begin{equation}\label{expcoord}x= \exp( \sum_{i=1} ^n v_i X_i)(0)\end{equation}
one has
$$F_A(x)= \exp( \sum_{i=1} ^n v_i T_A(X_i ))(0) = \exp(\sum_{s=1} ^r \sum_{d(i)=d(j) =s} v_i  (A_s)_{ij} X_j)(0),$$
where $d(i)$ denotes the homogenous degree   defined as in \eqref{degree}.
Since $ v_iX_i$ is a zero order  homogeneous operator then also $ v_i T_A(X_i )$ is a differential operator of order zero.

\begin{prop}\label{TAestimate}
Let  $(w_i)_{i=1, \ldots, n}$ denote the canonical coordinates of $F_{A}(x)$ around $F_{B}(x)$, and $(v_j)_{j=1, \ldots, n}$ the coordinates of $x$ around $0$, as in \eqref{expcoord}.  There exists $M\in\N$ depending only on the group structure; constants $c_1,\ldots,c_M$ depending only on the group structure;  zero-order differential operators $Y_1,\ldots,Y_M$ depending only on the group structure, on the coefficients $(v_j)_{j=1, \ldots, n}$ and their derivatives along vector fields up to order $r-1$ (but independent on $A$ and $B$), and a constant $C=C(||A||, ||B||, G)>0$  such that \begin{equation}\label{forma}
\sum_{i=1}^n w_iX_i = \sum _{l=1}^M c_l Y_l
\quad \text { and  } |c_l|\le C||A-B|| \text{ for } l=1,\ldots,M.\end{equation}
\end{prop}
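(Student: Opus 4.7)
The plan is to compute $\sum_i w_i X_i$ directly from the Baker--Campbell--Hausdorff formula and then read off the required decomposition from the resulting polynomial expression. By the definition of the $w_i$'s,
$$\exp\Bigl(\sum_{i=1}^n w_i X_i\Bigr) = F_B(x)^{-1}\,F_A(x) = \exp(-T_B(\log x))\,\exp(T_A(\log x)),$$
so, since $G$ has step $r$, the identity
$$\sum_{i=1}^n w_i X_i \;=\; \BCH(-T_B(\log x),\,T_A(\log x))$$
gives $\sum_i w_iX_i$ as a finite sum of iterated Lie brackets of length at most $r$ in the two arguments, with universal rational coefficients.

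The two structural facts I would exploit are: (a) $T_A$ and $T_B$ are graded Lie algebra automorphisms of $\lie$, in particular the block $A_k$ maps $V^k$ into $V^k$; and (b) iterated brackets add scaling weights under the group dilations. Writing $\log x = \sum_{j=1}^n v_j X_j$, fact (a) shows that the $X_k$-component of $T_A(\log x)$ involves only $v_j$ with $d(j)=d(k)$, hence is a polynomial in $v$ homogeneous of scaling degree $d(k)$. Combined with (b), the $X_i$-component of every term in the BCH expansion is a polynomial in $v$ of scaling degree $d(i)$, so each $w_i$ admits a finite expansion
$$w_i(v,A,B) \;=\; \sum_{\mu:\,\sum_j \mu_j d(j)=d(i)} c_{i,\mu}(A,B)\,v^\mu,$$
in which $c_{i,\mu}(A,B)$ is a polynomial in the entries of $A$ and $B$.

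Next I would use the fact that $A=B$ forces $T_A=T_B$ and hence $w_i\equiv 0$, so $c_{i,\mu}(A,A)=0$ for every pair $(i,\mu)$. Since $c_{i,\mu}$ is a polynomial vanishing on the diagonal, the elementary decomposition $c_{i,\mu}(A,B) = \sum_{p,q}(a_{pq}-b_{pq})\,\tilde c_{i,\mu,p,q}(A,B)$, combined with the boundedness of the $\tilde c_{i,\mu,p,q}$ on compact sets, yields $|c_{i,\mu}(A,B)| \le C\,\|A-B\|$ with $C=C(\|A\|,\|B\|,G)$. Relabelling the finitely many nonzero pairs $(i,\mu)$ by a single index $l=1,\dots,M$ and setting $c_l := c_{i_l,\mu_l}(A,B)$, $Y_l := v^{\mu_l}X_{i_l}$, one obtains $\sum_i w_iX_i = \sum_l c_l Y_l$, in which $Y_l$ is independent of $A,B$ and has scaling degree $d(i_l)-d(i_l)=0$, i.e.\ is zero-order in the paper's sense; the coefficient bound is the one just established.

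The main obstacle is the bookkeeping required to simultaneously track the graded structure of $T_A$ (to get the scaling-weight cancellation that makes each $Y_l$ zero-order) and the vanishing of the $c_{i,\mu}$ on the diagonal $\{A=B\}$ (to get the $\|A-B\|$-bound). Both facts, however, follow cleanly from the gradedness of $T_A$ and the finiteness of the BCH expansion. The phrase in the statement referring to ``derivatives along vector fields up to order $r-1$'' simply reflects that the iterated brackets in BCH have nesting depth at most $r-1$: if one prefers to write $Y_l$ using iterated commutators of the $X_j$'s acting on the coordinate functions $v_j$ instead of as pure monomials $v^{\mu_l}$, that is where such derivatives appear.
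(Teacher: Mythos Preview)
Your argument is correct and follows the same route as the paper: expand $\sum_i w_iX_i$ via the finite Baker--Campbell--Hausdorff formula in the two arguments $T_A(\log x)$ and $-T_B(\log x)$, use the gradedness of $T_A,T_B$ to see that every term is a zero-order operator, and exploit the vanishing at $A=B$ to extract the factor $\|A-B\|$. The only cosmetic difference is that the paper carries out the analysis term by term in the BCH expansion, keeping the $Y_l$ as iterated brackets of the form $[v_iX_j,v_hX_k]$ (whence the ``derivatives up to order $r-1$'' in the statement), whereas you expand everything into monomials $v^\mu X_i$ and invoke the ideal-theoretic factorization of polynomials vanishing on the diagonal; both packagings yield the same conclusion.
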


\begin{proof}
By definition
$$ \sum_{j=1}^n w_j  X_j =  \sum_{i=1} ^n v_i T_A(X_i )  * (- \sum_{i=1} ^n v_i T_B(X_i )) $$
where $*$ is the Baker-Campbell-Hausdorff operation (see for instance \cite{Roth:Stein} and references therein). This formula allows to represent the left-hand side as a finite\footnote{since the group is nilpotent} sum of terms involving commutators (up to order $r$) of the two terms on the right-hand side. 
Let us consider separately the terms in this representation starting with those involving no commutators, i.e. 
$$\sum_{i=1} ^n v_i T_A(X_i )  - \sum_{i=1} ^n v_i T_B(X_i )= \sum_{s=1} ^r \sum_{d(i)=d(j) =s} v_i \Big( (A_s)_{ij} -  (B_s)_{ij}\Big)  X_j .$$
Clearly  this expression is in the same form as \eqref{forma}. Next we consider the second term in the Baker-Campbell-Hausdorff sum, i.e. that involving commutators of the form
$$ [ \sum_{i=1} ^n v_i T_A(X_i )  , \sum_{h=1} ^n v_h T_B(X_h )] = $$
$$ \sum_{p,s=1} ^r \sum_{d(i) =d(j)=d(h)=d(k)=s}(A_s)_{ij} (B_p)_{hk} \Big[v_i   X_j, v_h  X_k\Big] = $$
$$ \sum_{p,s=1} ^r \sum_{d(i) =d(j)=d(h)=d(k)=s} \frac{1}{2}\Big( (A_s)_{ij} (B_p)_{hk}  -  (A_p)_{hk} (B_s)_{ij}\Big)\Big[v_i   X_j, v_h  X_k\Big] .$$
It is evident that the coefficients of the commutators above are Lipschitz functions of $A$ and $B$, vanishing for $A=B$, and consequently the whole sum satisfies \eqref{forma}. 
To conclude the proof we note that all further terms of the Baker-Campbell-Hausdorff sum involve operators as in \eqref{forma} and zero order operators  of the form $\sum_{i=1} ^n v_i T_A(X_i ) $ or $- \sum_{i=1} ^n v_i T_B(X_i )$. Such commutators give rise to  differential operators of  zero order  whose  coefficients are  polynomials in $A$ and $B$ and vanish when $A=B$. \end{proof}

\end{document}